\numberwithin{equation}{section}
\newcommand{\cS}{\mathcal{S}}
\newcommand{\cL}{\mathcal{L}}
\newcommand{\cX}{\mathcal{X}}
\newcommand{\m}{\mu}
\newcommand{\RB}{\mathrm{RB}}
\newcommand{\Conj}{\mathrm{Conj}}
\newcommand{\Gauss}[1]{\lfloor{#1}\rfloor}
\newcommand{\boldtitle}[1]{\title{\bfseries #1}}
\newenvironment{MSC}{%
\smallbreak
\noindent \textbf{2010\ Mathematics Subject Classification\,:}}
\newenvironment{keywords}{%
\noindent\textbf{Key words and phrases\,:}\itshape}
\theoremstyle{theorem}
\theoremstyle{definition}
\newtheorem*{multiproclaim}{\variable@name}
\theoremstyle{plain}
\newtheorem{thm}{Theorem}[section]
\newtheorem{lem}[thm]{Lemma}
\newtheorem{cor}[thm]{Corollary}
\theoremstyle{definition}
\newtheorem{remark}[thm]{Remark}
\author{
 Miki HIRANO\thanks{Partially supported by Grant-in-Aid for Scientific Research (C) No. 24540022.},
 Kohei KATATA
 and
 Yoshinori YAMASAKI
\thanks{Partially supported by Grant-in-Aid for Young Scientists (B) No. 24740018.}
}
\date{\today}
\begin{document}

\setlength{\baselineskip}{15pt}
\maketitle 

\begin{abstract}
 In this paper,
 we determine the bound of the valency of Cayley graphs of Frobenius groups with respect to normal Cayley subsets   
 which guarantees to be Ramanujan.
 We see that if the ratio between the orders of the Frobenius kernel and complement is not so small, 
 then this bound coincides with the trivial one
 coming from the trivial estimate of the largest non-trivial eigenvalue of the graphs.
 Moreover, in the cases of the dihedral groups of order twice odd primes,
 which are special cases of the Frobenius groups,
 we determine the same bound for the Cayley graphs of the groups with respect to not only normal but also all Cayley subsets.
 As is the case of abelian groups which we have treated in the previous papers,
 such a bound is equal to the trivial one in the above sense or,
 as exceptional cases, exceeds one from it.
 We then clarify that the latter occurs if and only if
 the corresponding prime is represented by a quadratic polynomial in a finite family.
\begin{MSC}
 {\it Primary}
 05C50;
 {\it Secondary}
 05C25,
 11M41,
 11N32.
\end{MSC} 
\begin{keywords}
 Ramanujan graphs, Frobenius groups, dihedral groups, Hardy-Littlewood conjecture.
\end{keywords}
\end{abstract}

\section{Introduction}

 Let $X$ be a $k$-regular graph with standard assumptions, that is, finite, undirected, connected and simple. 
 The graph $X$ is called {\it Ramanujan} if its largest non-trivial eigenvalue (in the sense of absolute value)
 is not greater than the Ramanujan bound $2\sqrt{k-1}$. 
 The Ramanujan property of the graph means that the associated Ihara zeta function satisfies the ``Riemann hypothesis'',
 which enables us to have a good  estimate for the number of the prime cycles in it (see, e.g., \cite{Terras2011}). 
 See \cite{Lubotzky2012} for the other relations between this property and various mathematical objects. 

 We have considered the following problem in our previous papers \cite{HiranoKatataYamasaki,Katata2014}. 
 Let $G$ be a finite group and $\cS$ a set of Cayley subsets of $G$ which includes $G\setminus\{1\}$
 with $1$ being the identity element of $G$. 
 Put
\[
 \cX
=\cX_{G,\cS}
=\left\{\left.X(S)\,\right|\,S\in\cS\right\},
\]
 where $X(S)$ is the Cayley graph of $G$ with respect to the Cayley subset $S\in\cS$. 
 Letting $\cL=\{|G|-|S|\,|\,S\in\cS\}$ be the set of ``covalencies'' of graphs in $\cX$, 
 we write $\cX=\bigsqcup_{l\in\cL}\cX_l$ where $\cX_l=\{X(S)\in\cX\,|\,|G|-|S|=l\}$. 
 Notice that $\cX_1=\{K_{|G|}\}$ where $K_{|G|}=X(G\setminus \{1\})$ is the complete graph with $|G|$-vertices. 
 According to \cite{AlonRoichman1994}, some neighbors of $K_{|G|}$ are expected to be Ramanujan.
 We want to estimate them precisely, that is, to determine the number 
\[
 \hat{l}_{G,\cS}
=\max\left\{\left.l\in \cL\,\right|\,\text{$X\in\cX_k$ is Ramanujan for all $1\le k\le l$}\right\} 
\]
 of edge-removal preserving the Ramanujan property from the complete graph $K_{|G|}\in\cX$.

 Previously, we have discussed this problem when $G$ is abelian and $\cS=\widetilde{\cS}$ is the set of {\it all} Cayley subsets of $G$. 
 In this paper, we treat the cases when $G$ is a Frobenius group
 and $\cS=\cS_0$ is consist of all {\it normal} Cayley subsets of $G$. 
 Here we call a Cayley subset normal if it is a union of conjugacy classes of $G$, 
 and a Frobenius group is a non-abelian finite group
 such as the dihedral group $D_{2m}$ of order $2m$ with odd $m$,
 the non-abelian $pq$-group $F_{p,q}$ with odd primes $p$, $q$, etc
 (for the definition of the Frobenius groups, see \cite{CurtisReiner1981}). 
 The main result is the following theorem. 

\begin{thm}
\label{thm:MainResult}
 Let $G=N\rtimes H$ be a Frobenius group
 where $N$ and $H$ are the Frobenius kernel and complements, respectively,
 and $\cS_0$ the set of all normal Cayley subsets of $G$. 
 If $\frac{|N|-1}{|H|}\ge 4$, then it holds that 
\[
 \hat{l}_{G,\cS_0}=l_{0},
\]
 where $l_{0}=\max\bigl\{l\in\cL\,\bigr|\,l\le 2(\sqrt{|G|}-1)\bigr\}$.
\end{thm}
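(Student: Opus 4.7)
The plan is to split the equality $\hat l_{G,\cS_0}=l_0$ into two one-sided inequalities.

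The lower bound $\hat l_{G,\cS_0}\ge l_0$ holds for any finite group and any family of Cayley subsets. For a $k$-regular graph $X$ on $n=|G|$ vertices with covalency $l=n-k$, the non-trivial eigenvalues of $X$ and of its complement $\bar X$ satisfy $\lambda_i(X)=-1-\lambda_i(\bar X)$, whence $|\lambda_i(X)|\le l$. The condition $l\le 2(\sqrt{n}-1)$ is equivalent to $(l+2)^2\le 4n$, i.e.\ $l^2\le 4(k-1)$, so $|\lambda_i(X)|\le l\le 2\sqrt{k-1}$ and $X$ is Ramanujan. Hence every graph in $\cX_l$ with $l\le l_0$ is Ramanujan.

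For the reverse inequality $\hat l_{G,\cS_0}\le l_0$, let $l^\ast:=\min\{l\in\cL\mid l>l_0\}$; by definition of $l_0$, one automatically has $l^\ast>2(\sqrt{|G|}-1)$. It suffices to exhibit one $S_\ast\in\cS_0$ of covalency $l^\ast$ whose Cayley graph $X(S_\ast)$ is not Ramanujan. The Frobenius hypothesis restricts $\cL$ as follows. The fixed-point-free action of $H$ on $N\setminus\{1\}$ gives $C_G(n)\subseteq N$ for every $1\ne n\in N$, so every $G$-conjugacy class inside $N$ has size divisible by $|H|$, while every $G$-conjugacy class meeting $G\setminus N$ has size divisible by $|N|$. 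Writing any $S\in\cS_0$ as $S=(G\setminus\{1\})\setminus C$ with $C$ a $G$-invariant subset closed under inversion, the covalency $1+|C|$ is at least $|N|+1$ as soon as $C$ meets $G\setminus N$, and at most $|N|$ when $C\subseteq N$, with the extreme value $|N|$ attained at $S=G\setminus N$.

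The assumption $(|N|-1)/|H|\ge 4$, equivalently $|N|\ge 4|H|+1$, now enters through the arithmetic identity
\[
 (|N|+2)^2-4|G|
=|N|\bigl(|N|-4|H|+4\bigr)+4
\ge 5|N|+4>0,
\]
from which $|N|+2>2\sqrt{|G|}$ and hence $|N|>2(\sqrt{|G|}-1)\ge l_0$. Since $|N|\in\cL$ and $|N|>l_0$ we obtain $l^\ast\le|N|$, and by the dichotomy above $l^\ast-1=|C|$ for some union $C\subseteq N$ of $G$-conjugacy classes. Take this $C$ and set $S_\ast:=G\setminus(\{1\}\cup C)$. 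Since $|H|\ge 2$, the quotient $G/N\cong H$ admits a non-trivial irreducible character, whose lift to $G$ produces a non-trivial irreducible character $\chi$ of $G$ with $\chi(c)=\chi(1)$ for every $c\in N$. The standard character formula for eigenvalues of Cayley graphs on normal subsets gives
\[
 \lambda_\chi
=\frac{1}{\chi(1)}\sum_{s\in S_\ast}\chi(s)
=-1-\frac{1}{\chi(1)}\sum_{c\in C}\chi(c),
\]
using $\sum_{g\in G}\chi(g)=0$. Evaluated on the chosen $\chi$, for which $\chi(c)=\chi(1)$ throughout $C\subseteq N$, this yields $\lambda_\chi=-1-|C|=-l^\ast$. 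Finally, $l^\ast>2(\sqrt{|G|}-1)$ rewrites as $(l^\ast)^2>4(|G|-l^\ast-1)=4(|S_\ast|-1)$, so $|\lambda_\chi|=l^\ast>2\sqrt{|S_\ast|-1}$ and $X(S_\ast)$ is not Ramanujan.

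The main obstacle, and the precise role of the hypothesis $(|N|-1)/|H|\ge 4$, is exactly this last existence step: realizing the smallest covalency exceeding $l_0$ using only $G$-conjugacy classes contained in $N$, so that $\chi$ is constant on $C$ and produces the maximal-modulus eigenvalue $-l^\ast$. If the hypothesis fails, $l^\ast$ may exceed $|N|$; every realization of covalency $l^\ast$ must then involve a $G$-class from $G\setminus N$, the character $\chi$ will not be constant on $C$, and the partial sum $\sum_{c\in C\cap(G\setminus N)}\chi(c)$ can exhibit enough cancellation that the estimate used here no longer disproves the Ramanujan bound.
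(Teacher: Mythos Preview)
Your proof is correct and follows essentially the same strategy as the paper's: the lower bound $\hat l\ge l_0$ comes from the trivial eigenvalue estimate (the paper uses character orthogonality where you use the complement relation, but the bound $|\lambda|\le l$ is the same), and the upper bound is obtained by showing $l_0<|N|$ under the hypothesis $r\ge 4$, realizing the next covalency $l^\ast$ with a complement $C\subseteq N$, and computing $\lambda_\chi=-l^\ast$ for a non-trivial character lifted from $H\cong G/N$. The only cosmetic difference is that the paper parametrizes $\cL$ explicitly as $\{l(a,b)=1+a|H|+b|N|\}$ and identifies $l^\ast=l(a_{i_0+1},b_1)$, whereas you argue directly from the divisibility constraints on conjugacy-class sizes; the content is the same.
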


 We remark that, when $G$ is non-abelian,
 the situation is completely different from whether $S$ is in $\cS_0$ or not.
 Actually, when $S$ is normal, we have expressions of eigenvalues of $X(S)$ in terms of irreducible characters of $G$. 
 Nevertheless, for the case of the dihedral group $G=D_{2m}$ of order $2m$ with odd $m$,
 one can calculate eigenvalues of $X(S)$ explicitly for any Cayley subset $S\in\widetilde{\cS}$
 because the dimensions of irreducible representations of $D_{2m}$ are at most two.
 As a consequence, one can show that there exists $\varepsilon_m\in \{0,1\}$ such that 
\[
 \hat{l}_{D_{2m},\widetilde{\cS}}=\hat{l}_{D_{2m},\cS_0}+\varepsilon_m
\] 
 for each $m$.
 For simplicity, we restrict to the case where $m=p$ is prime in this paper.
 When $\Gauss{2\sqrt{2p}}$ is even
 (here, for $x\in\mathbb{R}$, $\Gauss{x}$ represents for the largest integer not exceeding $x$),
 from Theorem~\ref{thm:MainResult},
 one immediately shows that $\varepsilon_p=0$.
 On the other hand when $\Gauss{2\sqrt{2p}}$ is odd,
 we can see that most of $\varepsilon_p$'s are equal to $0$. 
 Now let us call $p$ {\it exceptional} if $\Gauss{2\sqrt{2p}}$ is odd and $\varepsilon_p=1$ and {\it ordinary} otherwise. 
 Similarly to the abelian results, 
 we have the following theorem for $D_{2p}$. 

\begin{thm}
 The odd prime $p$ is exceptional if and only if it is of the form of one of the following quadratic polynomials;
\[
 p=
\begin{cases}
 2k^2+4k-3 & (k\ge 5),\\
 2k^2+4k-1 & (k\ge 3),\\
 2k^2+4k+1 & (k\ge 3)\\
\end{cases}
\quad \text{or} \quad 
 p=
\begin{cases}
 2k^2+6k-1 & (k\ge 7),\\
 2k^2+6k+1 & (k\ge 3),\\
 2k^2+6k+3 & (k\ge 3).
\end{cases}
\]
\end{thm}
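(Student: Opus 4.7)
My plan is to first reduce the biconditional to a concrete eigenvalue question, then carry out a detailed trigonometric extremization. By Theorem~\ref{thm:MainResult} and the remarks preceding the theorem, one has $\varepsilon_p = 0$ whenever $m := \Gauss{2\sqrt{2p}}$ is even, so the content is entirely in the case $m$ odd. In that case $\hat{l}_{D_{2p},\cS_0} = m-2$ holds by Theorem~\ref{thm:MainResult}, and $p$ is exceptional precisely when every Cayley graph of $D_{2p}$ of covalency $l = m-1$ is Ramanujan: covalency $l\le m-2$ is automatically Ramanujan via the character-sum bound $|\lambda|\le l$, and at covalency $m$ there is already a non-Ramanujan normal Cayley graph (whose sign-character eigenvalue has absolute value $m$). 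I would then use that every irreducible representation of $D_{2p}$ has dimension at most two to write the non-trivial eigenvalues explicitly: for $S = S_r\cup S_s$ they are $|S_r|-|S_s|$ together with $a_j\pm|b_j|$ for $j = 1,\ldots,(p-1)/2$, where $a_j = \sum_{r^k\in S_r}\omega^{jk}$, $b_j = \sum_{r^ks\in S_s}\omega^{jk}$, and $\omega = e^{2\pi i/p}$. A direct check using $|S_r|+|S_s| = 2p-m+1$ and $m\ge 5$ shows that the sign-character eigenvalue never exceeds $2\sqrt{2p-m}$, so the exceptional condition reduces to a uniform upper bound on $|a_j|+|b_j|$.

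Next I would extremize $|a_1|+|b_1|$; one can reduce to $j=1$ after the action of $(\bZ/p)^{\times}$ on characters. Dirichlet-kernel/compression arguments show that for fixed sizes, $|a_1|$ is maximized when $S_r$ is a symmetric arithmetic progression of consecutive indices (either near $0$ or near $(p-1)/2$, whichever gives the larger absolute value), and $|b_1|$ is maximized when $S_s$ is an arithmetic progression; the two optimizations decouple. Writing $s := |S_r|+1$ (necessarily odd) and $t := |S_s|$ with $s+t = m-1$, the sum-to-product identity
\[
 \sin(s\pi/p)+\sin(t\pi/p) = 2\sin\bigl((m-1)\pi/(2p)\bigr)\cos\bigl((s-t)\pi/(2p)\bigr)
\]
reduces the extremum to minimizing $|s-t|$ subject to $s$ odd. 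This splits by $m\bmod 4$ into the two regimes $m = 4k+3$ (balanced, $s = t$) and $m = 4k+5$ ($|s-t| = 2$).

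In the final step I would convert the Ramanujan inequality for the extremal configuration into an algebraic condition on $\delta := (m+1)^2-8p$, using $(m-1)^2 = 4(2p-m)+\delta$. After squaring and expanding the trigonometric expressions in $\pi/p$ to sufficient order (say through the $(\pi/p)^4$ terms), the inequality becomes explicit in $\delta$, and its borderline discrete solutions turn out to be $\delta\in\{8,24,40\}$ for $m = 4k+3$ and $\delta\in\{12,28,44\}$ for $m = 4k+5$. Substituting $8p = (m+1)^2 - \delta$ back yields exactly the six quadratic polynomials in $k$ listed in the statement. The lower bounds $k\ge 3,5,7$ will come from requiring that $\Gauss{2\sqrt{2p}}$ actually equals $4k+3$ or $4k+5$ (not $4k+2$ or $4k+4$) when $p$ is plugged into the corresponding polynomial, a requirement that fails for very small $k$.

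The hard part will be the rigorous extremality in the second step: one must prove that the arithmetic-progression configurations really do maximize $|a_j|+|b_j|$ uniformly over \emph{all} Cayley subsets of $D_{2p}$, not just among the already-AP candidates. A related technical subtlety is tracking the sub-leading trigonometric terms in the third step precisely enough to pin down the exact discrete thresholds for $\delta$ rather than only an asymptotic one; I expect this will require explicit Chebyshev-polynomial identities or an algebraic reformulation of the Ramanujan inequality directly in terms of $m$ and $p$.
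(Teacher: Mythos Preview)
Your approach is essentially the paper's: reduce to the extremal Cayley set via the Dirichlet-kernel argument (the paper's Lemma~\ref{lem:maximum}), use the sum-to-product identity to pick the balanced split $(\check l_1,\check l_2)$, then compare $|\mu_1|$ with $\RB(\hat l+1)$ by an asymptotic expansion. Your parameter $\delta=(m+1)^2-8p$ is just a reparametrization of the paper's $c_r$ via $c_r=\tfrac{(r+3)^2-\delta}{8}$ with $m=4k+r+2$, and your discrete values $\delta\in\{8,24,40\}$, $\{12,28,44\}$ recover exactly $C_r=\{r-4,r-2,r\}$.

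One correction to your last paragraph: the lower bounds on $k$ are \emph{not} all forced by the interval condition $\Gauss{2\sqrt{2p}}=4k+r+2$. For the polynomial $2k^2+6k-1$ (i.e.\ $r=3$, $c_3=-1$) the interval constraint $p\in I_{3,k}$ only requires $k\ge 5$, yet the correct threshold is $k\ge 7$: for $k=5,6$ the primes $p=79,107$ do lie in $I_{3,k}$, but the Ramanujan inequality $|\mu_1(\check l_1,\check l_2)|\le\RB(\hat l+1)$ narrowly fails there. So these thresholds genuinely come from the sub-leading behaviour you flagged as delicate, and the paper settles them by directly checking the sign of $F_r(p)$ for small $k$ rather than by a purely algebraic criterion.
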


 The classical Hardy-Littlewood conjecture asserts that
 every quadratic polynomials express infinitely many primes under some standard conditions.
 We finally remark that
 the above result implies that
 there exists infinitely many exceptional primes 
 if and only if the Hardy-Littlewood conjecture is true 
 for at least one of the above six quadratic polynomials.

 Throughout of the present paper,
 we denote by $\mathbb{P}$ the set of all odd primes and
 $\mathbb{Z}_m=\mathbb{Z}/m\mathbb{Z}$ for $m\in\mathbb{Z}_{\ge 1}$.
 
\section{Preliminary}

\subsection{Cayley graphs}

 Let $X$ be a $k$-regular graph with $m$-vertices ($m<\infty$) which is undirected, connected and simple. 
 The {\it adjacency matrix} $A_X$ {\it of} $X$ is the symmetric matrix of size $m$
 whose entry is $1$ if the corresponding pair of vertices are connected by an edge and $0$ otherwise. 
 We call the eigenvalues of $A_X$ the {\it eigenvalues of} $X$. 
 The set $\Lambda(X)$ of all eigenvalues of $X$ is given as 
\[
 \Lambda(X)=\bigl\{\lambda_i\,\bigl|\,k=\lambda_0>\lambda_1\ge\cdots\ge\lambda_{m-1}\ge -k\bigr\}.
\]
 Let $\m(X)$ be the largest non-trivial eigenvalue of $X$ in the sense of absolute value, that is, 
\[
 \m(X)=\max\bigl\{|\lambda|\,\bigl|\,\lambda\in \Lambda{(X)}, \ |\lambda|\ne k\bigr\}.
\]
 Then, $X$ is called {\it Ramanujan} if the inequality $\mu(X)\le 2\sqrt{k-1}$ holds. 
 Here the constant $2\sqrt{k-1}$ is often called the {\it Ramanujan bound for} $X$ and is denoted by $\RB(X)$. 

 Let $G$ be a finite group with the identity element $1$
 and $S$ a Cayley subset of $G$, that is, it is a symmetric generating subset of $G$ without $1$.
 We denote by $X(S)$ the {\it Cayley graph} of $G$ with respect to the Cayley subset $S$.
 This is undirected, connected and simple $|S|$-regular graph 
 with the vertex set $G$ and the edge set $\{(x,y)\in G^2\,|\,x^{-1}y\in S\}$. 
 In what follows, for a Cayley subset $S$,
 we write $\Lambda(S)=\Lambda(X(S))$, $\mu(S)=\mu(X(S))$, $\RB(S)=\RB(X(S))$, and so on.  
 It is well known that
 if the Cayley subset $S$ of $G$ is normal, that is, it is a union of conjugacy classes of $G$,
 then the eigenvalues of $X(S)$ can be written by irreducible characters of $G$.

\begin{lem}
[{\it cf.} \cite{KrebsShaheen2011}]
\label{lem:normalCayleyeigenvalues}
 Let $G$ be a finite group and $\widehat{G}$ the set of all irreducible characters of $G$.
 For a normal Cayley subset $S$ of $G$,
 we have $\Lambda(S)=\{\lambda_{\chi}\,|\,\chi\in \widehat{G}\}$ where 
\begin{equation}
\label{for:EigenvalueforNormalCay}
 \lambda_{\chi}
=\frac{1}{\chi(1)}\sum_{s\in S}\chi(s)
\end{equation} 
 with the multiplicity $\chi(1)^2$.
\qed
\end{lem}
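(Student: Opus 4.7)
The plan is to realise the adjacency matrix $A_{X(S)}$ as a specific element of the group algebra $\mathbb{C}[G]$ acting via the left regular representation, and then to diagonalise it by invoking Schur's lemma, the key input being that normality of $S$ makes the element central. This reduces the computation of the spectrum to a character-theoretic bookkeeping.

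First I would identify the vertex set of $X(S)$ with $G$ and observe that, with respect to the natural basis indexed by $G$, $A_{X(S)}$ is precisely the matrix of $\sigma_S := \sum_{s \in S} s \in \mathbb{C}[G]$ acting via the left regular representation $\rho_{\mathrm{reg}}$: the $(x,y)$-entry is $1$ iff $x^{-1}y \in S$, which matches the contribution to $\rho_{\mathrm{reg}}(\sigma_S)$ coming from the action $s \cdot x = sx$ summed over $s \in S$.

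Next, because $S$ is a union of conjugacy classes, $\sigma_S$ lies in the centre $Z(\mathbb{C}[G])$. Decomposing $\rho_{\mathrm{reg}} \cong \bigoplus_{\chi \in \widehat{G}} \chi(1)\cdot \rho_\chi$ into isotypic components, Schur's lemma forces $\rho_\chi(\sigma_S) = \lambda_\chi \cdot I_{\chi(1)}$ for some scalar $\lambda_\chi$; taking traces yields $\chi(1)\lambda_\chi = \sum_{s \in S} \chi(s)$, which is exactly \eqref{for:EigenvalueforNormalCay}. Since $\rho_\chi$ occurs with multiplicity $\chi(1)$ in $\rho_{\mathrm{reg}}$ and has dimension $\chi(1)$, the scalar $\lambda_\chi$ contributes to the spectrum with total multiplicity $\chi(1)\cdot\chi(1)=\chi(1)^2$; the dimension check $\sum_\chi \chi(1)^2 = |G|$ confirms that every eigenvalue has been accounted for.

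There is no real obstacle here: the reality of each $\lambda_\chi$, which is required for the symmetric matrix $A_{X(S)}$, follows from $S = S^{-1}$ together with $\chi(s^{-1}) = \overline{\chi(s)}$. The only cosmetic subtlety is that $\Lambda(S)$ must be interpreted as a multiset of numerical values, since distinct characters may produce numerically equal $\lambda_\chi$; the multiplicity formula is to be understood as the sum of $\chi(1)^2$ over all $\chi$ attaining a prescribed value.
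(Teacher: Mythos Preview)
Your argument is correct and is the standard one: identify $A_{X(S)}$ with the action of $\sigma_S=\sum_{s\in S}s$ in the left regular representation, use normality of $S$ to place $\sigma_S$ in $Z(\mathbb{C}[G])$, and read off the scalars on each isotypic block via Schur's lemma and a trace. The paper itself does not supply a proof---the lemma is stated with a citation to \cite{KrebsShaheen2011} and closed with a \qed---so there is nothing to compare beyond noting that your proof is exactly the one that reference would give.
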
   

\subsection{A problem for Ramanujan Cayley graphs}

 Let $\cS$ be the set of all normal Cayley subsets of $G$.
 For $S\in\cS$, we define $l(S)=|G|-|S|=|G\setminus S|$ and call it a {\it covalency} of $X(S)$.
 Letting $\cL=\{l(S)\,|\,S\in\cS\}$, we write $\cS=\bigsqcup_{l\in\cL}\cS_l$ where $\cS_{l}=\{S\in\cS\,|\,l(S)=l\}$.
 As we have explained in the introduction, our purpose is to determine 
\[
 \hat{l}=\hat{l}(G)
=\max\left\{l\in \cL\,\left|\,\text{$X(S)$ is Ramanujan for all $S\in\cS_k$ $(1\le k\le l)$}\right.\right\}.
\]
 The following lemma is fundamental.
\begin{lem}
\label{lem:trivialestimate}
 Let $S\in\cS$.
 If $l(S)\le 2(\sqrt{|G|}-1)$, then $X(S)$ is Ramanujan.
\end{lem}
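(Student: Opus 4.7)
The plan is to reduce the Ramanujan condition $\mu(S) \le 2\sqrt{|S|-1}$ to the cruder bound $\mu(S) \le l(S)$, and then check that the hypothesis on $l(S)$ makes these two bounds compatible. Write $l = l(S) = |G|-|S|$ and let $X^{c}$ denote the graph-theoretic complement of $X(S)$ inside the complete graph $K_{|G|}$, so that $X^{c}$ is $(l-1)$-regular on the vertex set $G$.

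The key step is the matrix identity $A_{X(S)} + A_{X^{c}} = J - I$, where $J$ is the all-ones matrix. Since the all-ones vector $\mathbf{1}$ is a common eigenvector of both adjacency matrices (with eigenvalues $|S|$ and $l-1$, respectively), every eigenvector $v \perp \mathbf{1}$ of $A_{X(S)}$ with eigenvalue $\lambda$ satisfies $A_{X^{c}} v = -(1+\lambda) v$. The $(l-1)$-regularity of $X^{c}$ forces $|1+\lambda| \le l-1$, whence $\lambda \in [-l, l-2]$ for every non-trivial eigenvalue $\lambda$ of $X(S)$; in particular $\mu(S) \le l$.

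It then remains to verify an elementary inequality: squaring $l + 2 \le 2\sqrt{|G|}$ yields $(l+2)^{2} \le 4|G|$, which rearranges to $l^{2} \le 4(|G|-l-1) = 4(|S|-1)$, i.e., $l \le 2\sqrt{|S|-1} = \RB(S)$. Combining with the previous step gives $\mu(S) \le l \le \RB(S)$, so $X(S)$ is Ramanujan.

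No genuine obstacle is expected here: the argument uses only the $|S|$-regularity of $X(S)$ and never invokes the normality of $S$ nor the group-theoretic structure. For normal $S$ one could alternatively derive $\mu(S) \le l$ directly from Lemma~\ref{lem:normalCayleyeigenvalues} by using $\sum_{g \in G}\chi(g) = 0$ for non-trivial $\chi \in \widehat{G}$ to rewrite the sum in \eqref{for:EigenvalueforNormalCay} as a sum over $\{1\} \cup (G\setminus(S\cup\{1\}))$, and then applying the standard character bound $|\chi(g)| \le \chi(1)$ termwise.
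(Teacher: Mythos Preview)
Your argument is correct, and it differs from the paper's own proof. The paper works entirely inside the character formula of Lemma~\ref{lem:normalCayleyeigenvalues}: for non-trivial $\chi\in\widehat{G}$ it uses the orthogonality relation $\sum_{g\in G}\chi(g)=0$ to rewrite $\lambda_{\chi}$ as $-\frac{1}{\chi(1)}\sum_{s\in G\setminus S}\chi(s)$, and then applies $|\chi(s)|\le\chi(1)$ termwise to obtain $|\lambda_{\chi}|\le l(S)$. This is exactly the alternative you sketch in your final paragraph, and it relies on $S$ being a \emph{normal} Cayley subset so that Lemma~\ref{lem:normalCayleyeigenvalues} applies.

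Your main route, via the adjacency identity $A_{X(S)}+A_{X^{c}}=J-I$ and the trivial eigenvalue bound for the $(l-1)$-regular complement, is a genuinely different and in fact more general argument: it uses only the $|S|$-regularity of $X(S)$ and would prove the same conclusion for an arbitrary Cayley subset $S\in\widetilde{\cS}$, not just a normal one. The paper's approach, by contrast, stays within the representation-theoretic framework that drives the rest of Section~3, at the cost of needing the hypothesis $S\in\cS$. Both approaches land on the same inequality $\mu(S)\le l(S)$, after which the reduction $l(S)\le 2(\sqrt{|G|}-1)\Longleftrightarrow l(S)\le 2\sqrt{|S|-1}$ is identical.
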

\begin{proof}
 Take $S\in\cS$ with $l(S)\le \frac{|G|}{2}$.
 For any non-trivial irreducible character $\chi\in\widehat{G}$, we have 
\[
 \lambda{\chi}
=\frac{1}{\chi(1)}\sum_{s\in S}\chi(s)
=-\frac{1}{\chi(1)}\sum_{s\in G\setminus S}\chi(s)
\]
 from the orthogonality.
 This together with $|\chi(s)|\le \chi(1)$ shows that
 $|\lambda_{\chi}|\le \min\{|S|,l(S)\}=l(S)$.
 Hence, if $l(S)\le\RB(S)=2\sqrt{|G|-l(S)-1}$,
 equivalently $l(S)\le 2(\sqrt{|G|}-1)$, then $X(S)$ is Ramanujan.
 Remark that $2(\sqrt{|G|}-1)\le \frac{|G|}{2}$ for any $G$.
\end{proof}

 Now, let
\[
 l_{0}
=l_0(G)
=\max\left\{l\in\cL\,\left|\,l\le 2(\sqrt{|G|}-1)\right.\right\}.
\]
 From Lemma~\ref{lem:trivialestimate}, we have $ l_0\le \hat{l}$.
 We call $l_0$ a trivial (lower) bound of $\hat{l}$.

\section{Ramanujan Frobenius graphs}

 In this section, we study Cayley graphs of Frobenius groups with respect to normal Cayley subsets. 
 We call such graphs {\it Frobenius graphs}.
 From now on, for a group $G$ and $x,y\in G$, we write $x^{y}=y^{-1}xy$.
 Moreover, we denote by $\Conj_{G}(x)$ the conjugacy class of $x\in G$ in $G$ and
 by $c(G)$ the number of conjugacy classes in $G$.

\subsection{Character table of the Frobenius group}

 Let $G=N\rtimes H$ be a Frobenius group,
 where $N$ and $H$ are subgroups of $G$ called the Frobenius kernel and complement, respectively
 (see \cite{CurtisReiner1981} for details of the Frobenius groups and their characters).
 Notice that  
\[
 r=\frac{|N|-1}{|H|}
\]
 is a positive integer.
 
 We first recall the character table of the Frobenius group $G$.
 It is known that a set of all representatives of the conjugacy classes of $G$ can be taken as 
 $\{1\}\sqcup \{x_i\}^{k}_{i=1}\sqcup\{y_j\}^{h}_{j=1}$ where   
 $x_i\in N$ ($1\le i\le k$) and $y_j\in H$ ($1\le j\le h$) with $k=\frac{c(N)-1}{|H|}$ and $h=c(H)-1$.
 Notice that $\Conj_{G}(x_i)\subset N$ and $\Conj_{G}(y_j)\subset G\setminus N$ with 
\[
 |\Conj_{G}(x_i)|=|\Conj_{N}(x_i)||H|, \quad 
 |\Conj_{G}(y_j)|=|\Conj_{H}(y_i)||N|.
\]
 The irreducible characters of $G$ are given as follows.
 Since $H\simeq G/N$, a non-trivial irreducible character of $H$
 is corresponding to that of $G$ which has the kernel containing $N$.
 We write these as $\chi_{\alpha}$ ($1\le \alpha\le h$).
 Moreover, for a non-trivial irreducible character $\psi_{\beta}$ of $N$,
 its induced character is again an irreducible character of $G$.
 We write these as $\phi_{\beta}=\mathrm{Ind}(\psi_{\beta})$ ($1\le \beta\le k$).
 Notice that 
\[
 \phi_{\beta}(x)
=\frac{1}{|N|}\sum_{y\in G \atop x^y\in N}\psi_{\beta}(x^y)
=\sum_{z\in H}\psi_{\beta}(x^z).
\] 
 It is known that these together with the trivial character ${\bf 1}$ exhaust all irreducible characters of $G$. 
 Now the character table of $G$ is given as follows:

\begin{table}[!h]
\begin{center}
\renewcommand{\arraystretch}{1.3}
\begin{tabular}{c||c|c|c}
  & $1$ & $x_i \ (1\le i\le k)$ & $y_j \ (1\le j\le h)$ \\
\hline
\hline
 ${\bf 1}$ & $1$ & $1$ & $1$ \\
\hline
 $\chi_{\alpha} \ (1\le \alpha\le h) $ & $\chi_{\alpha}(1)$ & $\chi_{\alpha}(1)$ & $\chi_{\alpha}(y_j)$ \\
\hline
 $\phi_{\beta} \ (1\le \beta\le k)$ & $|H|\psi_{\beta}(1)$ & $\displaystyle{\sum_{z\in H}\psi_{\beta}(x^z)}$ & $0$
\end{tabular}
\caption{The character table of the Frobenius group $G=N\rtimes H$.}
\end{center}
\end{table}

\subsection{Eigenvalues of Frobenius graphs}

 Now, let us calculate the eigenvalues of Frobenius graphs with respect to normal Cayley subsets. 

 For subsets $X\subset \{x_i\}^{k}_{i=1}$ and $Y\subset \{y_j\}^{h}_{j=1}$,
 we put $S_{X,Y}=S_X\sqcup S_Y$ where 
\[
 S_{X}=\bigsqcup_{x\in X}\Conj_G(x), \quad
 S_Y=\bigsqcup_{y\in Y}\Conj_G(y).
\] 
 We say that $X\subset \{x_i\}^{k}_{i=1}$ (resp. $Y\subset \{y_j\}^{h}_{j=1}$) is {\it symmetric}
 if $S_X$ (resp. $S_Y$) is symmetric.
 It is clear that $S_{X,Y}$ is symmetric if and only if both $X$ and $Y$ are. 
 We have  
\[
 \cS\subset 
\left\{S_{X,Y}\,\left|\,\text{both $X\subset \{x_i\}^{k}_{i=1}$ and $Y\subset\{y_j\}^{h}_{j=1}$ are symmetric}\right.\right\}.
\]
 Notice that, if $S_{X,Y}$ in the right hand side satisfies $|S_{X,Y}|>\frac{|G|}{2}$,
 then it generates $G$ and hence is in $\cS$ and, moreover, 
 $Y\ne \emptyset$ because otherwise $|S_{X,Y}|=|S_{X,\emptyset}|<|N|-1$, which is contradict to $\frac{|G|}{2}\ge |N|$. 
 This means that, in the determination of $l_0$ and $\hat{l}$,
 we may assume that $S\in\cS$ is always of the form of $S=S_{X,Y}$ for some symmetric subsets $X\subset \{x_i\}^{k}_{i=1}$ and $\emptyset\ne Y\subset\{y_j\}^{h}_{j=1}$.
 
 From Lemma~\ref{lem:normalCayleyeigenvalues}, we have 
 
\begin{lem}
\label{lem:specG}
 For $X\subset\{x_i\}^{k}_{i=1}$ and $\emptyset\ne Y\subset\{y_j\}^{h}_{j=1}$, we have
\[
 \Lambda(S_{X,Y})
=\{\lambda_{\bf 1}\}\cup\{\lambda_{\chi_{\alpha}}\}^{h}_{\alpha=1}\cup \{\lambda_{\phi_{\beta}}\}^{k}_{\beta=1},
\]
 where
\begin{align}
\label{for:SXY}
 \lambda_{\bf 1}
&=|S_{X,Y}|=|H|\sum_{x\in X}|\Conj_N(x)|+|N|\sum_{y\in Y}|\Conj_H(y)|,\\
 \lambda_{\chi_{\alpha}}
&=|H|\sum_{x\in X}|\Conj_{N}(x)|+\frac{|N|}{\chi_{\alpha}(1)}\sum_{y\in Y}\chi_{\alpha}(y)|\Conj_{H}(y)|,\nonumber\\
 \lambda_{\phi_{\beta}}
&=\frac{1}{\psi_{\beta}(1)}\sum_{x\in X}\sum_{z\in H}\psi_{\beta}(x^{z})|\Conj_{N}(x)|\nonumber
\end{align}
 with the multiplicities $1$, $\chi_{\alpha}(1)^2$ and $|H|^2\psi_{\beta}(1)^2$, respectively.
\end{lem}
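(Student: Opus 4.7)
The plan is to specialize the general formula \eqref{for:EigenvalueforNormalCay} of Lemma~\ref{lem:normalCayleyeigenvalues} to each row of the character table of $G=N\rtimes H$, using the conjugacy-class decomposition $S_{X,Y}=S_X\sqcup S_Y$. Since every irreducible character $\chi$ is constant on conjugacy classes, I have
\[
 \sum_{s\in S_{X,Y}}\chi(s)
=\sum_{x\in X}|\Conj_G(x)|\chi(x)+\sum_{y\in Y}|\Conj_G(y)|\chi(y).
\]
The class sizes $|\Conj_G(x_i)|=|\Conj_N(x_i)||H|$ and $|\Conj_G(y_j)|=|\Conj_H(y_j)||N|$ (recorded just before the character table in the excerpt) convert this into the weighted sums appearing in \eqref{for:SXY}. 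Dividing by $\chi(1)$ and reading the corresponding value of $\chi(1)$ from the first column of the character table gives the three eigenvalue formulas; the multiplicities $\chi(1)^2$ are immediate from Lemma~\ref{lem:normalCayleyeigenvalues}, yielding $1$, $\chi_\alpha(1)^2$, and $(|H|\psi_\beta(1))^2$ for $\mathbf{1}$, $\chi_\alpha$, $\phi_\beta$ respectively.

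For the trivial character $\mathbf{1}$, every value is $1$, so the sum simply collapses to $|S_{X,Y}|$, giving the stated formula for $\lambda_{\mathbf{1}}$. For $\chi_\alpha$, the fact that $N$ lies in the kernel forces $\chi_\alpha(x_i)=\chi_\alpha(1)$ for each $x_i\in N$, which cancels the $1/\chi_\alpha(1)$ factor on the $X$-part and leaves $|H|\sum_{x\in X}|\Conj_N(x)|$; the $Y$-part is left unchanged as $\frac{|N|}{\chi_\alpha(1)}\sum_{y\in Y}\chi_\alpha(y)|\Conj_H(y)|$. For $\phi_\beta=\mathrm{Ind}(\psi_\beta)$, the key observation is that $\phi_\beta$ vanishes on $G\setminus N$, which kills the $Y$-contribution entirely. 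This vanishing is exactly the Frobenius group property built into the character table of $G$ (last row, last column), so the $X$-part together with the displayed identity $\phi_\beta(x)=\sum_{z\in H}\psi_\beta(x^z)$ and $\phi_\beta(1)=|H|\psi_\beta(1)$ produces the claimed expression for $\lambda_{\phi_\beta}$.

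There is no real obstacle here; the statement is a straightforward character-theoretic bookkeeping once Lemma~\ref{lem:normalCayleyeigenvalues} and the character table of $G$ are in hand. The only step that deserves a word of care is checking that the irreducible characters listed in the table really exhaust $\widehat{G}$ (so that the three families $\{\mathbf{1}\}$, $\{\chi_\alpha\}_{\alpha=1}^h$, $\{\phi_\beta\}_{\beta=1}^k$ account for every eigenvalue of $X(S_{X,Y})$ with correct multiplicity); this is classical Frobenius theory and is quoted from \cite{CurtisReiner1981}.
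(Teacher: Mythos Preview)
Your proposal is correct and follows exactly the same approach as the paper, which simply states that the formulas ``follow from the character table of $G$ obtained in the previous subsection.'' You have just spelled out in detail the bookkeeping that the paper leaves to the reader.
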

\begin{proof}
 These follow from the character table of $G$ obtained in the previous subsection.
\end{proof}

\subsection{Main results}

 To determine $l_0$ and $\hat{l}$,
 we at first describe the set $\cL=\{\l(S)\,|\,S\in\cS\}$.

 For symmetric subsets $X\subset\{x_i\}^{k}_{i=1}$ and $\emptyset\ne Y\subset\{y_j\}^{h}_{j=1}$,
 we respectively put 
\[
 a_X=r-\sum_{x\in X}|\Conj_N(x)|, \quad 
 b_Y=|H|-1-\sum_{y\in Y}|\Conj_H(y)|.
\]
 We have $0\le a_X\le r$, $0\le b_{Y}<|H|-1$ and from \eqref{for:SXY}
\begin{equation}
\label{for:lSXY}
 l(S_{X,Y})=|G|-|S_{X,Y}|=1+a_{X}|H|+b_Y|N|.
\end{equation} 
 
\begin{lem}
\label{lem:cardinarity}
 Let $X,X'\subset\{x_i\}^{k}_{i=1}$ and $\emptyset\ne Y,Y'\subset\{y_j\}^{h}_{j=1}$ be symmetric subsets.
 Then, $l(S_{X,Y})=l(S_{X',Y'})$ if and only if $(a_X,b_Y)=(a_{X'},b_{Y'})$.
\end{lem}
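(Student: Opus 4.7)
The plan is to reduce the claim to an elementary divisibility argument using the explicit formula \eqref{for:lSXY}, namely $l(S_{X,Y}) = 1 + a_X |H| + b_Y |N|$. The ``if'' direction is immediate from this formula, so the content is in the ``only if'' direction.

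For the ``only if'' direction, I would start from the assumed equality $1 + a_X |H| + b_Y |N| = 1 + a_{X'} |H| + b_{Y'} |N|$ and rearrange it to
\[
 (a_X - a_{X'})\,|H| = (b_{Y'} - b_Y)\,|N|.
\]
The key observation is that, by definition of $r$, we have $|N| = r|H| + 1$, so $\gcd(|H|, |N|) = 1$. Therefore $|H|$ divides $b_{Y'} - b_Y$.

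Next, I would invoke the a priori bounds $0 \le b_Y, b_{Y'} < |H| - 1$ recorded just before \eqref{for:lSXY}, which give $|b_{Y'} - b_Y| < |H|$, forcing $b_Y = b_{Y'}$. Plugging this back into the rearranged identity and using $|H| \ge 1$ yields $a_X = a_{X'}$ as well.

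The argument is essentially a one-step divisibility observation together with the size bound on $b_Y$, so there is no real obstacle; the only thing to be careful about is to notice and use the coprimality $\gcd(|H|, |N|) = 1$, which is automatic for Frobenius groups since $|N| - 1$ is divisible by $|H|$. No additional properties of the symmetric subsets $X, Y$ beyond the definitions of $a_X$ and $b_Y$ are needed.
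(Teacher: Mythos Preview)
Your proof is correct and follows essentially the same approach as the paper: both reduce immediately to the identity $(a_X - a_{X'})|H| = (b_{Y'} - b_Y)|N|$ obtained from \eqref{for:lSXY} and finish with a size argument. The only minor difference is that the paper bounds the $a$-side instead: since $|a_X - a_{X'}|\le r$ one has $|(a_X - a_{X'})|H|\,|\le r|H|=|N|-1<|N|$, so this multiple of $|N|$ must vanish; this avoids the coprimality step you introduce, but your route via $\gcd(|H|,|N|)=1$ and the bound on $b_Y$ is equally valid.
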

\begin{proof}
 From \eqref{for:lSXY}, 
 one sees that $l(S_{X,Y})=l(S_{X',Y'})$ is equivalent to $(a_{X}-a_{X'})|H|+(b_{Y}-b_{Y'})|N|=0$.
 Since $(a_{X}-a_{X'})|H|<|N|$, it has to hold that $a_X=a_{X'}$ and hence $b_{Y}=b_{Y'}$.
\end{proof}

 Put $l(a,b)=1+a|H|+b|N|$.
 From Lemma~\ref{lem:cardinarity},
 we write $\cS=\bigsqcup_{a\in A,b\in B}\cS_{l(a,b)}$ where 
\begin{align*}
 A&=\left\{a_{X}\,\left|\,\text{$X\subset\{x_i\}^{k}_{i=1}$ is symmetric}\right.\right\}=\{a_1<a_2<\cdots<a_{m}\},\\
 B&=\left\{b_{Y}\,\left|\,\text{$\emptyset\ne Y\subset\{y_j\}^{h}_{j=1}$ is symmetric}\right.\right\}=\{b_1<b_2<\cdots<b_n\},
\end{align*}
 with $m=|A|$ and $n=|B|$.
 Remark that $a_1=0$ and $a_{m}=r$, which respectively correspond to the cases $X=\{x_i\}^{k}_{i=1}$ and $X=\emptyset$.
 Similarly,  $b_1=0$, which corresponds to the case $Y=\{y_j\}^{h}_{j=1}$, and $b_{n}<|H|-1$ because $Y\ne\emptyset$.
 Moreover, when $h\ge 2$, since the center of $H$ is not trivial,
 there exists $y'\in \{y_j\}^{h}_{j=1}$ such that $\Conj_H(y')=\{y'\}$.
 This implies that $b_2=1$ if $\{y'\}$ is not symmetric, that is, $y'^2\ne 1$ and $2$ otherwise.
 We below describe the relations among $l(a,b)$ for $a\in A$ and $b\in B$; 
\begin{align*}
 1&=l(a_1,b_1)<l(a_2,b_1)<\cdots<l(a_m,b_1)=(b_1+1)|N|=|N|\\
<b_2|N|+1&=l(a_1,b_2)<l(a_2,b_2)<\cdots<l(a_m,b_2)=(b_2+1)|N|\\
<b_3|N|+1&=l(a_1,b_3)<l(a_2,b_3)<\cdots<l(a_m,b_3)=(b_3+1)|N|\\
& \,\ \vdots\\
<b_n|N|+1&=l(a_1,b_n)<l(a_2,b_n)<\cdots<l(a_m,b_n)=(b_n+1)|N|.
\end{align*}

 The followings are our main results,
 which are involved to the determinations of $l_0$ and $\hat{l}$
 for Frobenius graphs with respect to normal Cayley subsets.  
 
\begin{thm}
\label{thm:main}
 Assume that $r=\frac{|N|-1}{|H|}\ge 4$.
\begin{itemize}
\item[$(1)$] There exists $1\le i_0 <m$ such that $l_0=l(a_{i_0},b_1)<|N|$.
\item[$(2)$] It holds that $\hat{l}=l_0$.
\end{itemize}
\end{thm}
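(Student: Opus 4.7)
The plan is to establish (1) and (2) separately, leveraging the eigenvalue formula of Lemma~\ref{lem:specG} together with character orthogonality on the complement $H$.

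For (1), it suffices to prove the strict inequality $l_0 < |N|$, for then the chain of inequalities displayed just above Theorem~\ref{thm:main} shows that the elements of $\cL$ lying below $|N|$ are precisely $l(a_i, b_1)$ for $1 \le i < m$, forcing $l_0 = l(a_{i_0}, b_1)$ with $1 \le i_0 < m$. The hypothesis $r \ge 4$ rewrites as $|N| \ge 4|H|+1$, so $|N|^2 > 4|N||H| = 4|G|$ and therefore $|N| > 2\sqrt{|G|} \ge l_0 + 2$.

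For (2), Lemma~\ref{lem:trivialestimate} already gives $l_0 \le \hat{l}$, so it is enough to exhibit a non-Ramanujan Frobenius graph whose covalency is the immediate successor $l_1$ of $l_0$ in $\cL$. By the ordering above and part (1), that successor is $l_1 = l(a_{i_0+1}, b_1)$, and the maximality in the definition of $l_0$ yields $l_1 > 2(\sqrt{|G|}-1)$. I pick any symmetric $X \subset \{x_i\}_{i=1}^k$ with $a_X = a_{i_0+1}$ (possible by the definition of $A$) and set $Y_0 = \{y_j\}_{j=1}^h$, so that $b_{Y_0} = 0$ and $l(S_{X,Y_0}) = l_1$. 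Since each $\chi_\alpha$ is inflated from a non-trivial character of $H \cong G/N$, orthogonality of that character with the trivial character of $H$ gives
\[
\sum_{y \in Y_0} \chi_\alpha(y)\,|\Conj_H(y)| = \sum_{h \in H \setminus \{1\}} \chi_\alpha(h) = -\chi_\alpha(1),
\]
and substituting this into the formula of Lemma~\ref{lem:specG} collapses $\lambda_{\chi_\alpha}$ to $|H|(r - a_X) - |N| = -1 - a_X|H| = -l_1$. Hence $|\lambda_{\chi_\alpha}| = l_1$, and the same squaring argument used in Lemma~\ref{lem:trivialestimate} turns the inequality $l_1 > 2(\sqrt{|G|}-1)$ into $l_1 > 2\sqrt{|G|-l_1-1} = \RB(S_{X,Y_0})$. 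Thus $X(S_{X,Y_0})$ is not Ramanujan, whence $\hat{l} < l_1$ and therefore $\hat{l} = l_0$.

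The main conceptual point, rather than a serious obstacle, is isolating the correct subfamily: among graphs of covalency at most $|N|$, those with $b_Y = 0$ (equivalently, $G \setminus N \subset S$) force the $\chi_\alpha$-eigenvalues, via orthogonality on $H$, to saturate the trivial estimate of Lemma~\ref{lem:trivialestimate} exactly. It is this saturation which makes $l_0$ tight and sidesteps any need to estimate the more delicate $\phi_\beta$-eigenvalues built from the irreducible characters $\psi_\beta$ of $N$.
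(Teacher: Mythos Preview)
Your proof is correct and follows essentially the same approach as the paper: for (1) you bound $2(\sqrt{|G|}-1)$ below $|N|$ using $r\ge 4$, and for (2) you choose $Y$ to be the full set $\{y_j\}_{j=1}^{h}$ so that character orthogonality on $H$ forces $\lambda_{\chi_\alpha}=-l(S)$, exactly as the paper does. Your write-up is slightly more explicit in identifying the successor $l_1=l(a_{i_0+1},b_1)$ via the displayed chain and in spelling out the squaring step, but there is no substantive difference in strategy.
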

\begin{proof}
 Under the condition $r\ge 4$,
 we have $|H|\le \frac{|N|-1}{4}<\frac{|N|}{4}$
 and hence $2(\sqrt{|G|}-1)<2\sqrt{|N||H|}<2\sqrt{|N|\frac{|N|}{4}}=|N|$.
 Therefore, the first assertion follows from the definition of $l_0$.

 To prove the second one, it is sufficient to show that 
 there exists $S\in\cS_{l(a_{i_0+1},b_1)}$ such that $X(S)$ is not Ramanujan.
 Actually, take any $S=S_{X,Y}\in \cS_{l(a_{i_0+1},b_1)}$.
 Then, since $Y=\{y_j\}^{h}_{j=1}$, we have from Lemma~\ref{lem:specG}
\begin{align*}
 \lambda_{\chi_{\alpha}}(S_{X,Y})
&=|H|\sum_{x\in X}|\Conj_{N}(x)|+\frac{|N|}{\chi_{\alpha}(1)}\sum^{h}_{i=1}\chi_{\alpha}(y_j)|\Conj_{H}(y_j)|\\
&=|H|\Bigl(\frac{|N|-1}{|H|}-a_{i_0+1}\Bigr)+\frac{|N|}{\chi_{\alpha}(1)}(-\chi_{\alpha}(1))\\
&=-(1+a_{i_0+1}|H|)\\
&=-l(a_{i_0+1},b_1)\\
&=-l(S_{X,Y}).
\end{align*}
 Here, the second equality follows from the orthogonality of characters 
 together with the fact that $\chi_{\alpha}$ is regarded as a non-trivial irreducible character of $H$.  
 This implies that $|\lambda_{\chi_{\alpha}}(S_{X,Y})|=l(S_{X,Y})>l_0$
 and hence $|\lambda_{\chi_{\alpha}}(S_{X,Y})|>\RB(S)$ by the definition of $l_0$.
 This ends the proof. 
\end{proof}

 We remark that,
 since $l(a_{i_0},b_1)\le 2(\sqrt{|G|}-1)<l(a_{i_0+1},b_1)$ with $l(a_i,b_1)=1+a_i|H|$,
 it can be expressed as
\begin{equation}
\label{for:a0}
 a_{i_0}=\max\Bigl\{a\in A\,\Bigl|\,a\le\frac{2\sqrt{|N||H|}-3}{|H|}\Bigr\}.
\end{equation}

 Let us calculate $\hat{l}$ for the following typical examples of the Frobenius groups.

 First, for $p\in\mathbb{P}$, 
 consider the dihedral group   
\[
 D_{2p}=\mathbb{Z}_{p}\rtimes \mathbb{Z}_{2}=\langle x,y\,|\,x^{p}=y^{2}=1,\ y^{-1}xy=x^{-1}\rangle,
\]
 which is a Frobenius group with $r=\frac{p-1}{2}$.
 In this case,
 one can take representatives of the conjugacy classes of $D_{2p}$ as $\{1\}\sqcup \{x^{v}\}^{\frac{p-1}{2}}_{v=1}\sqcup \{y\}$.
 Since all the conjugacy classes 
 $\Conj_{D_{2p}}(x^v)=\{x^{v},x^{-v}\}$ for $1\le v\le \frac{p-1}{2}$ and $\Conj_{D_{2p}}(y)=\{x^ay\,|\,0\le a\le p-1\}$
 are symmetric, 
 we have $A=\{i\,|\,0\le i\le \frac{p-1}{2}\}$ and $B=\{0\}$.
 Now, from Theorem~\ref{thm:main} together with \eqref{for:a0},
 we obtain the following result (notice that $r\ge 4$ if $p\ge 11$). 
 
\begin{cor}
\label{cor:D2p}
 Let $G=D_{2p}$ where $p\in\mathbb{P}$.
 If $p\ge 11$, then we have 
\begin{equation*}
 \hat{l}=l_0
=2\Gauss{\sqrt{2p}-\frac{1}{2}}-1.
\end{equation*}
 \qed 
\end{cor}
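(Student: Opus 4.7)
The plan is to specialise Theorem~\ref{thm:main} to $G=D_{2p}$, so essentially all the work is bookkeeping: identify the sets $A$, $B$, invoke formula~\eqref{for:a0}, and then tidy up the floor function.

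First I would verify that the hypothesis $r=(|N|-1)/|H|\ge 4$ of Theorem~\ref{thm:main} is satisfied. Here $|N|=p$ and $|H|=2$, so $r=(p-1)/2$, which is $\ge 4$ whenever $p\ge 9$; in particular $p\ge 11$ suffices. Thus Theorem~\ref{thm:main}(2) immediately yields $\hat{l}=l_0$, and it only remains to compute $l_0$ explicitly.

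Next I would determine $A$ and $B$ from the conjugacy-class structure. Since $N=\mathbb{Z}_p$ is abelian, each $N$-class in $N\setminus\{1\}$ is a singleton, and the $G$-classes contained in $N$ are precisely the pairs $\{x^v,x^{-v}\}$ for $1\le v\le(p-1)/2$, each already symmetric as an inverse-closed set. Consequently every subset $X\subset\{x^v\}_{v=1}^{(p-1)/2}$ is symmetric, and $a_X=r-|X|$ ranges over $\{0,1,\dots,(p-1)/2\}$; hence $A=\{0,1,\dots,(p-1)/2\}$. On the side of $H=\mathbb{Z}_2$ the only non-trivial class is $\{y\}$, which is symmetric since $y^2=1$, and the only admissible $Y$ is $Y=\{y\}$, giving $B=\{0\}$. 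In particular $b_1=0$, consistent with Theorem~\ref{thm:main}(1).

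Finally, substituting $|N|=p$, $|H|=2$ into formula~\eqref{for:a0} gives
\[
 a_{i_0}=\max\Bigl\{a\in A\,\Bigl|\,a\le\tfrac{2\sqrt{2p}-3}{2}\Bigr\}
 =\BGauss{\sqrt{2p}-\tfrac{3}{2}},
\]
where the last equality uses that $\sqrt{2p}-\tfrac32<\tfrac{p-1}{2}$ for all $p\ge 11$ so the maximum is unconstrained by the upper end of $A$. Then $l_0=l(a_{i_0},b_1)=1+a_{i_0}|H|=1+2\bGauss{\sqrt{2p}-3/2}$, and the identity $\lfloor x-1\rfloor=\lfloor x\rfloor-1$ applied to $x=\sqrt{2p}-1/2$ rewrites this as $2\bGauss{\sqrt{2p}-1/2}-1$, as claimed. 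The only mildly non-routine point is checking that the candidate maximiser does not exceed $(p-1)/2$, which amounts to the trivial inequality $(p/2-1)^2\ge 0$; everything else is a direct translation of the general theorem.
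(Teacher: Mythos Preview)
Your proof is correct and follows exactly the approach indicated in the paper: identify $A=\{0,1,\dots,(p-1)/2\}$ and $B=\{0\}$ from the conjugacy-class structure of $D_{2p}$, check $r=(p-1)/2\ge 4$ for $p\ge 11$, and then plug $|N|=p$, $|H|=2$ into Theorem~\ref{thm:main} and formula~\eqref{for:a0}. The only additional work you do beyond the paper's one-line sketch is the explicit floor-function manipulation $1+2\bGauss{\sqrt{2p}-3/2}=2\bGauss{\sqrt{2p}-1/2}-1$ and the range check $\sqrt{2p}-3/2<(p-1)/2$, both of which are routine and correctly handled.
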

 
 We next consider the group  
\[
 F_{p,q}=\mathbb{Z}_{p}\rtimes_{u} \mathbb{Z}_{q}=\langle x,y\,|\,x^{p}=y^{q}=1,\ y^{-1}xy=x^u\rangle,
\]
 where $p,q\in\mathbb{P}$ with $q\,|\,(p-1)$ and $u$ an element of $\mathbb{Z}^{\times}_p$ of order $q$.
 It is known that $F_{p,q}$ is also a Frobenius group with $r=\frac{p-1}{q}$.
 Let $S=\langle u \rangle=\{1,u,u^2,\ldots,u^{q-1}\}$ and
 $V=\{v_1,\ldots,v_r\}$ a set of all representatives of $\mathbb{Z}^{\times}_p/S$.
 Then, one can take representatives of the conjugacy classes of $F_{p,q}$ as
 $\{1\}\sqcup \{x^{v}\}_{v\in V}\sqcup \{y^{b}\}^{q-1}_{b=1}$.
 Because none the conjugacy classes 
 $\Conj_{F_{p,q}}(x^v)=\{x^{vs}\,|\,s\in S\}$ for $v\in V$ and
 $\Conj_{F_{p,q}}(y^b)=\{x^ay^b\,|\,0\le a\le p-1\}$ for $1\le b\le q-1$ are symmetric, 
 we have $A=\{2i\,|\,0\le i\le \frac{p-1}{2q}\}$ and $B=\{2j\,|\,0\le j< \frac{q-1}{2}\}$.
 Similarly as above, one obtains the following result (notice that $r\ge 4$ if $p\ge 4q+1$).

\begin{cor}
\label{cor:Fpq}
 Let $G=F_{p,q}$ where $p,q\in\mathbb{P}$ with $q\,|\,(p-1)$.
 If $p\ge 4q+1$, then we have
\begin{equation*}
\hat{l}=l_0=2q\Gauss{\frac{2\sqrt{pq}-3}{2q}}+1.
\end{equation*}
 \qed
\end{cor}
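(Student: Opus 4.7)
The plan is to reduce the claim to Theorem~\ref{thm:main} together with the explicit formula~\eqref{for:a0}, so the only work consists of identifying the sets $A$ and $B$ from the conjugacy-class data of $F_{p,q}$ recorded just above the statement and then evaluating the resulting expression.

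First I note that $|N|=p$ and $|H|=q$, so the hypothesis $p\ge 4q+1$ is exactly the condition $r=(|N|-1)/|H|\ge 4$. Theorem~\ref{thm:main} therefore applies and gives $\hat l=l_0=l(a_{i_0},b_1)=1+a_{i_0}q+b_1p$.

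Next I determine $A$ and $B$. Since $N\cong\mathbb{Z}_p$ and $H\cong\mathbb{Z}_q$ are abelian, $|\Conj_N(x)|=|\Conj_H(y)|=1$ for every nontrivial $x\in N$ and $y\in H$, so $a_X=r-|X|$ and $b_Y=q-1-|Y|$. The excerpt records that none of the $G$-conjugacy classes $\Conj_G(x^v)$ or $\Conj_G(y^b)$ is individually symmetric: symmetry of $\Conj_G(x^v)$ would force $-1\in S=\langle u\rangle$, which is impossible because $|S|=q$ is odd, while symmetry of $\Conj_G(y^b)$ would force $2b\equiv 0\pmod q$, impossible for $1\le b\le q-1$. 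Hence the inverse maps $v\mapsto -v$ on $V$ and $b\mapsto -b$ on $\{1,\dots,q-1\}$ are fixed-point-free involutions, which forces every symmetric $X$ and every symmetric $Y$ to have even cardinality. This yields
\[
 A=\Bigl\{2i\,\Bigl|\,0\le i\le \tfrac{p-1}{2q}\Bigr\},\qquad B=\Bigl\{2j\,\Bigl|\,0\le j<\tfrac{q-1}{2}\Bigr\},
\]
and in particular $b_1=0$.

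Finally I substitute into \eqref{for:a0}:
\[
 a_{i_0}=\max\Bigl\{2i\in A\,\Bigl|\,2i\le\frac{2\sqrt{pq}-3}{q}\Bigr\}=2\BGauss{\frac{2\sqrt{pq}-3}{2q}},
\]
and then $l_0=1+a_{i_0}q$ produces the announced formula. The one small verification is that the floor on the right genuinely lies in the index range $\{0,1,\dots,(p-1)/(2q)\}$ indexing $A$; this reduces to $2\sqrt{pq}<p+2$, which under $p\ge 4q+1$ is a one-line squaring argument. I expect no conceptual obstacle here: once $A$ and $B$ have been extracted from the symmetry analysis above, the corollary is a direct bookkeeping consequence of Theorem~\ref{thm:main}.
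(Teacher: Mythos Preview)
Your proof is correct and follows exactly the route the paper intends: the paper records $A=\{2i\mid 0\le i\le (p-1)/(2q)\}$ and $B=\{2j\mid 0\le j<(q-1)/2\}$ in the paragraph preceding the corollary, then simply writes ``Similarly as above, one obtains the following result'' and places a \qed, leaving the substitution into Theorem~\ref{thm:main} and \eqref{for:a0} implicit. Your write-up fills in precisely those details (the symmetry analysis forcing even $|X|,|Y|$, the evaluation $a_{i_0}=2\bGauss{(2\sqrt{pq}-3)/(2q)}$, and the range check), so there is nothing to add.
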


\begin{remark}
 There are several examples of the Frobenius groups with $r=\frac{|N|-1}{|H|}\le 3$
 for which the claims in Theorem~\ref{thm:main} do not hold.
 Actually, consider the dihedral groups $D_{2p}$ for $p=3,5,7$,
 which are corresponding to the cases $r=1,2,3$, respectively.
 In these cases, we have $(m,n)=(\frac{p+1}{2},1)$ and can check that $l_0=l(a_{\frac{p-1}{2}},b_1)=p-2$.
 Moreover, it holds that $\hat{l}=l(a_{\frac{p+1}{2}},b_1)=p$
 because the corresponding Cayley graph is $X(S_{\emptyset,\{y\}})$,
 which is Ramanujan because $\Lambda(S_{\emptyset,\{y\}})=\{\pm p,0\}$.
\end{remark}

\section{Ramanujan dihedral graphs}

 In this section,
 we more precisely study Ramanujan Cayley graphs of the dihedral groups $D_{2p}$ of order $2p$ with $p\in\mathbb{P}$,
 which are special cases of the Frobenius groups.
 For simplicity, we call such graphs {\it dihedral graphs}.
 
\subsection{An universal problem on Ramanujan Cayley graphs}

 We here consider our problem in more general situation.
 For a finite group $G$, let $\widetilde{\cS}$ be the set of {\it all} Cayley subsets of $G$.
 Similarly to the case of $\cS$, 
 we put $\widetilde{\cL}=\{l(S)\,|\,S\in \widetilde{\cS}\}$,
 and write $\widetilde{\cS}=\bigsqcup_{l\in\widetilde{\cL}}\widetilde{\cS}_l$
 where $\widetilde{\cS}_{l}=\{S\in\widetilde{\cS}\,|\,l(S)=l\}$.
 Now our interest is whether one can determine
\[
 \tilde{l}
=\tilde{l}(G)
=\max\left\{l\in \widetilde{\cL}\,\left|\,\text{$X(S)$ is Ramanujan for all $S\in\widetilde{\cS}_k$ ($1\le k\le l$)}\right.\right\}.
\] 
 We remark that when $G$ is abelian, which we have studied in \cite{HiranoKatataYamasaki,Katata2014},
 $\tilde{l}=\hat{l}$ because $\tilde{\cS}=\cS$.

 The determination of $\tilde{l}$ is much more difficult rather than that of $\hat{l}$
 since one does not have explicit expressions of eigenvalues, such as \eqref{for:EigenvalueforNormalCay}, of general Cayley graphs.
 However, in the case of the dihedral group,
 we can manage this problem because of the fact that 
 the dimensions of irreducible representations are at most two
 together with the relation 
\begin{equation}
\label{def:l1}
 \tilde{l}<l_1=\min\{\l\in\cL\,|\,l>l_0\},
\end{equation}
 which follows directly from the definition of $\hat{l}$ and Theorem~\ref{thm:main}.
 
\subsection{Initial results}

 Let us consider the dihedral graph $X(S)$ of $D_{2p}$ with respect to $S\in\widetilde{\cS}$.
 Divide $D_{2p}$ into two parts as $D_{2p}=D_{1}\sqcup D_{2}$ 
 where $D_{1}=\{1,x,x^2,\ldots,x^{p-1}\}$ and $D_{2}=\{y,xy,x^2y,\ldots,x^{p-1}y\}$.
 According to this decomposition, 
 we write $S=S_{1}\sqcup S_{2}$ and $l(S)=l_1(S)+l_2(S)$ where $S_{i}=S\cap D_{i}$ and $l_{i}(S)=|D_{i}\setminus S_i|=p-|S_{i}|$ for $i=1,2$.
 Remark that, since any subset of $D_2$ is symmetric because the order of any element in $D_2$ is two, 
 $S$ is symmetric if and only if $S_{1}$ is, which implies that $|S_{1}|$ is always even and hence $l_1(S)$ is odd.
 Define $z_j=z_j(S),w_j=w_j(S)\in\mathbb{C}$ ($0\le j\le p-1$) by 
 $z_{0}=|S_{1}|+|S_{2}|$, $w_0=|S_{1}|-|S_{2}|$ and
\begin{equation}
\label{def:zw}
 z_{j}=\sum_{x^a\in S_{1}}\omega^{ja}=-\sum_{x^a\in D_1\setminus S_{1}}\omega^{ja}, \qquad
 w_{j}=\sum_{x^ay \in S_{2}}\omega^{ja}=-\sum_{x^ay \in D_2\setminus S_{2}}\omega^{ja}
\end{equation}
 for $1\le j\le p-1$.
 Here, $\omega=e^{\frac{2\pi i}{p}}$.
 Note that $z_{j}\in\mathbb{R}$ because $S_{1}$ is symmetric. 
 It is known that the eigenvalues of $X(S)$ are described by using $z_j$ and $w_j$.

\begin{lem}
\begin{itemize}
\item[$\mathrm{(i)}$] $\Lambda(S)=\{\mu^{(+)}_{j},\mu^{(-)}_{j}\,|\,0\le j\le p-1\}$ where $\mu^{(\pm)}_{j}=z_j\pm |w_j|$.
\item[$\mathrm{(ii)}$] Let $|\mu_j|=\max\{|\mu^{(+)}_{j}|,|\mu^{(-)}_{j}|\}$.
 Then, we have $|\mu_j|=|z_{j}|+|w_{j}|$.
\end{itemize}
\end{lem}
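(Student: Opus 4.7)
The plan is to diagonalise the adjacency matrix $A$ of $X(S)$ via the block decomposition induced by $D_{2p}=D_1\sqcup D_2$. Writing
\[
A=\begin{pmatrix} P & Q \\ Q^T & P \end{pmatrix},
\]
where the $p\times p$ block $P$ records edges inside $D_1$ and $Q$ records edges from $D_1$ to $D_2$, I first note that the lower-right block coincides with $P$ because conjugation by $y$ inverts $x$ and $S_1$ is symmetric. A direct inspection yields $P_{ab}=1$ precisely when $x^{b-a}\in S_1$ and $Q_{ab}=1$ precisely when $x^{b-a}y\in S_2$, so both blocks are circulant over $\mathbb{Z}_p$. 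The matrix $P$ is real symmetric because $S_1$ is symmetric, whereas $Q$ in general is not symmetric, since every element of $D_2$ is an involution and so $S_2$ may be an arbitrary subset of $D_2$.

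Being circulant, $P$ and $Q$ are simultaneously diagonalised by the Fourier vectors $v_j=(\omega^{ja})_{a=0}^{p-1}$, and a short computation gives $Pv_j=z_jv_j$, $Qv_j=w_jv_j$, and $Q^Tv_j=\overline{w_j}\,v_j$ — the last relation coming from the fact that transposition of a circulant matrix corresponds to complex conjugation in the Fourier basis. Performing the blockwise Fourier transform thus reduces the full eigenvalue problem for $A$ to $p$ independent $2\times 2$ problems
\[
M_j=\begin{pmatrix} z_j & w_j \\ \overline{w_j} & z_j \end{pmatrix}, \qquad 0\le j\le p-1.
\]
Since $z_j$ is real (a consequence of $S_1$ being symmetric), the characteristic polynomial of $M_j$ is $(\lambda-z_j)^2-|w_j|^2$, so its eigenvalues are $z_j\pm|w_j|$, establishing (i).

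For (ii) I would invoke the elementary identity $\max(|a+t|,|a-t|)=|a|+t$ valid for $a\in\mathbb{R}$ and $t\ge 0$; applying it with $a=z_j$ and $t=|w_j|$ yields $|\mu_j|=|z_j|+|w_j|$ immediately.

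The only delicate point will be the Fourier bookkeeping — in particular verifying that $Q^T$ acts on $v_j$ by $\overline{w_j}$ rather than by $w_j$, and reconciling the boundary case $j=0$ with the somewhat unusual definitions of $z_0$ and $w_0$ given just before the lemma. Beyond this, the argument is a straightforward blockwise diagonalisation and I anticipate no substantive obstacle.
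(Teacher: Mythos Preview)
Your argument is correct. The paper itself does not prove part~(i) at all---it simply cites Babai's paper \cite{Babai1979}---and dismisses~(ii) as ``direct''. What you have written is precisely the standard block-circulant diagonalisation that underlies that reference: the adjacency matrix splits as $\begin{pmatrix}P & Q\\ Q^{T} & P\end{pmatrix}$ with circulant blocks, the Fourier vectors $v_j$ simultaneously diagonalise $P$, $Q$, $Q^{T}$ with eigenvalues $z_j$, $w_j$, $\overline{w_j}$, and the resulting $2\times 2$ Hermitian blocks $M_j$ have spectrum $z_j\pm|w_j|$. Your treatment of~(ii) via $\max(|a+t|,|a-t|)=|a|+t$ is exactly what the paper means by ``direct''.

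The one genuine wrinkle you flagged is real: the paper's special definitions $z_0=|S_1|+|S_2|$ and $w_0=|S_1|-|S_2|$ do \emph{not} match what the Fourier formula gives at $j=0$ (namely $|S_1|$ and $|S_2|$), and with the paper's values the formula $\mu_0^{(\pm)}=z_0\pm|w_0|$ does not yield $|S_1|\pm|S_2|$ as claimed in the subsequent proof of Lemma~\ref{lem:trivialbound}. This looks like a typo in the paper rather than a flaw in your reasoning; the natural (and consistent) choice is $z_0=|S_1|$, $w_0=|S_2|$, which your block computation produces automatically and which makes both the lemma and its later use correct.
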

\begin{proof}
 See, e.g., \cite{Babai1979} for the first assertion.
 The second one is direct.
\end{proof}

 One can obtain a lower bound of $\tilde{l}$
 coming from the trivial estimate of the eigenvalues.
 
\begin{lem}
\label{lem:trivialbound}
 For $p\ge 29$, we have $\tilde{l}\ge \Gauss{2\sqrt{2p}}-2$.  
\end{lem}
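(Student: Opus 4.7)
The plan is to mimic Lemma~\ref{lem:trivialestimate} for arbitrary (not necessarily normal) Cayley subsets of $D_{2p}$. Set $L=\Gauss{2\sqrt{2p}}-2$ and fix $S\in\widetilde{\cS}$ with $l(S)\le L$. The goal is to bound every non-trivial eigenvalue of $X(S)$ by $l(S)$ and then match $l(S)$ against the Ramanujan bound $\RB(S)=2\sqrt{2p-l(S)-1}$.

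By the eigenvalue lemma just above, the spectrum of $X(S)$ consists of the trivial eigenvalue $|S|$, the sign-representation eigenvalue $|S_1|-|S_2|$, and the values $\mu^{(\pm)}_j=z_j\pm|w_j|$ with $1\le j\le p-1$. For the last family, the ``dual'' expressions $z_j=-\sum_{x^a\in D_1\setminus S_1}\omega^{ja}$ and $w_j=-\sum_{x^ay\in D_2\setminus S_2}\omega^{ja}$ immediately give $|z_j|\le l_1(S)$ and $|w_j|\le l_2(S)$, hence $|\mu_j|=|z_j|+|w_j|\le l_1(S)+l_2(S)=l(S)$. The sign-representation eigenvalue satisfies $||S_1|-|S_2||=|l_2(S)-l_1(S)|\le l(S)$. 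So every non-trivial eigenvalue has modulus at most $l(S)$.

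The closing arithmetic is routine: $l(S)\le L\le 2\sqrt{2p}-2$ is equivalent to $(l(S)+2)^2\le 8p$, hence to $l(S)\le 2\sqrt{2p-l(S)-1}=\RB(S)$, so $X(S)$ is Ramanujan.

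To upgrade this to $\tilde{l}\ge L$ via the definition of $\tilde{l}$, one must finally check $L\in\widetilde{\cL}$. For $p\ge 29$ we have $2p-L>p$, so any $S$ with $l(S)=L$ must satisfy $|S_2|\ge 2$, and since any two distinct elements of $D_2$ generate $D_{2p}$ when $p$ is prime, such $S$ exists and is a bona fide Cayley subset (e.g.\ take any even-sized symmetric $S_1\subseteq D_1\setminus\{1\}$ together with $|S_2|=2p-L-|S_1|$). The only mild subtlety in the whole argument is to isolate the sign-representation eigenvalue from the trivial eigenvalue $|S|$ inside the $j=0$ part of the spectrum described in the preceding lemma; beyond that, the proof is a routine trivial estimate parallel to Lemma~\ref{lem:trivialestimate}.
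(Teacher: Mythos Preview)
Your argument is correct and follows the same route as the paper: bound $|\mu_j|=|z_j|+|w_j|\le l_1(S)+l_2(S)=l(S)$ for $1\le j\le p-1$ via the complementary sums in \eqref{def:zw}, handle $\mu^{(-)}_0=|S_1|-|S_2|$ separately, and then compare $l(S)$ with $\RB(S)=2\sqrt{2p-l(S)-1}$. Your treatment of the sign eigenvalue via $||S_1|-|S_2||=|l_2(S)-l_1(S)|\le l(S)$ is in fact slightly cleaner than the paper's one-sided estimate $\mu^{(-)}_0=2|S_1|-2p+l(S)\le l(S)$, and your closing verification that $L\in\widetilde{\cL}$ is a detail the paper leaves implicit.
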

\begin{proof}
 We first remark that $\mu^{(+)}_{0}=|S_{1}|+|S_{2}|=|S|$ is the largest eigenvalue of $X(S)$
 and hence can write $\mu(S)=\max\bigl\{|\mu|\,|\,\mu\in \Lambda(S),\ |\mu|\ne \mu^{(+)}_0\bigr\}=\max\{|\mu^{(-)}_0|,|\mu_1|,\ldots,|\mu_{p-1}|\}$.

 Assume that $l(S)\le \frac{p}{2}$, which implies that $l_i(S)\le \frac{p}{2}$ for $i=1,2$.
 Then, from \eqref{def:zw}, for $1\le j\le p-1$, we see that  
 $|\mu_j|=|z_{j}|+|w_{j}|\le \min\bigl\{|S_{1}|,l_1(S)\bigr\}+\min\bigl\{|S_{2}|,l_2(S)\bigr\}\le l_1(S)+l_2(S)=l(S)$.
 Moreover, it holds that $\mu^{(-)}_0=2|S_1|-2p+l(S)\le l(S)$.
 These show that $\mu(S)\le l(S)$. 
 Therefore, if $l(S)\le \mathrm{RB}(S)=2\sqrt{2p-l(S)-1}$,
 equivalently $l(S)\le\Gauss{2\sqrt{2p}}-2$,
 then $X(S)$ is Ramanujan.
 Notice that $2\sqrt{2p}-2\le \frac{p}{2}$ when $p\ge 29$.
\end{proof}

 By virtue of results on the Frobenius graphs obtained in the previous section,  
 one gets an upper bound of $\tilde{l}$.
 As a consequence, we can narrow the candidates of $\tilde{l}$ down to at most two.   

\begin{thm}
\label{thm:epsilon}
 Assume that $p\ge 29$.
\begin{itemize}
\item[$\mathrm{(i)}$] If $\Gauss{2\sqrt{2p}}$ is even, then we have $\tilde{l}=\hat{l}+1$.  
\item[$\mathrm{(ii)}$] If $\Gauss{2\sqrt{2p}}$ is odd, then we have $\tilde{l}=\hat{l}$ or $\tilde{l}=\hat{l}+1$.  
\end{itemize} 
 Here $\hat{l}=2\Gauss{\sqrt{2p}-\frac{1}{2}}-1$ is obtained in Corollary~\ref{cor:D2p}.
\end{thm}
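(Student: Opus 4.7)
The plan is to combine two bounds already at hand. From Lemma~\ref{lem:trivialbound} we have the lower bound $\tilde{l} \ge \Gauss{2\sqrt{2p}} - 2$, and from \eqref{def:l1} the upper bound $\tilde{l} < l_1 = \min\{l \in \cL : l > l_0\}$. The proof is then essentially a matter of matching these against the explicit formula $\hat{l} = 2\Gauss{\sqrt{2p} - \frac{1}{2}} - 1$ of Corollary~\ref{cor:D2p}.

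First I would pin down $l_1$ in the dihedral case. From the description of $\cL$ given just before Corollary~\ref{cor:D2p}, for $G = D_{2p}$ we have $A = \{0, 1, \ldots, \frac{p-1}{2}\}$, $B = \{0\}$, $|H| = 2$ and $|N| = p$, so $\cL = \{1 + 2a : 0 \le a \le \frac{p-1}{2}\} = \{1, 3, \ldots, p\}$. Since consecutive elements of $\cL$ differ by $2$, we obtain $l_1 = l_0 + 2 = \hat{l} + 2$, whence $\tilde{l} \le \hat{l} + 1$.

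Next I would carry out the short parity bookkeeping that relates the lower bound to $\hat{l}$, using that $\sqrt{2p}$ is irrational for $p \in \mathbb{P}$ (so all relevant floor inequalities are strict). In case (i), write $\Gauss{2\sqrt{2p}} = 2m$; then $m < \sqrt{2p} < m + \frac{1}{2}$, so $\sqrt{2p} - \frac{1}{2} \in (m - \frac{1}{2}, m)$ and $\Gauss{\sqrt{2p} - \frac{1}{2}} = m - 1$. Thus $\hat{l} = 2m - 3$, and the lower bound reads $\tilde{l} \ge 2m - 2 = \hat{l} + 1$, which together with the upper bound yields $\tilde{l} = \hat{l} + 1$. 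In case (ii), write $\Gauss{2\sqrt{2p}} = 2m + 1$; then $m + \frac{1}{2} < \sqrt{2p} < m + 1$, so $\Gauss{\sqrt{2p} - \frac{1}{2}} = m$ and $\hat{l} = 2m - 1 = \Gauss{2\sqrt{2p}} - 2$. The lower bound thus only gives $\tilde{l} \ge \hat{l}$, leaving the two possibilities $\tilde{l} \in \{\hat{l}, \hat{l} + 1\}$.

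There is no real obstacle here: the argument is an accounting exercise reducing the theorem to bounds already established, namely Theorem~\ref{thm:main} (underpinning \eqref{def:l1}) and Lemma~\ref{lem:trivialbound}, together with the irrationality of $\sqrt{2p}$ and the hypothesis $p \ge 29$ (needed so that both Lemma~\ref{lem:trivialbound} and Corollary~\ref{cor:D2p} apply). The genuinely interesting content, namely deciding in case (ii) when $\tilde{l} = \hat{l}$ versus $\tilde{l} = \hat{l} + 1$, is left for the subsequent analysis, which will have to produce or rule out non-Ramanujan dihedral graphs at covalency $\hat{l} + 1 = \Gauss{2\sqrt{2p}} - 1$.
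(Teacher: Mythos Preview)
Your proof is correct and follows essentially the same approach as the paper's: both combine the lower bound $\tilde{l}\ge\Gauss{2\sqrt{2p}}-2$ from Lemma~\ref{lem:trivialbound} with the upper bound $\tilde{l}<l_1=\hat{l}+2$ from \eqref{def:l1}, and then do the parity bookkeeping relating $\Gauss{2\sqrt{2p}}-2$ to $\hat{l}=2\Gauss{\sqrt{2p}-\tfrac{1}{2}}-1$. Your version is slightly more explicit in deriving $l_1=\hat{l}+2$ from the structure of $\cL$ and in invoking the irrationality of $\sqrt{2p}$, but the argument is the same.
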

\begin{proof}
 We first remark that, for $\alpha\in\mathbb{R}$, it holds that 
\[
 2\Gauss{\alpha-\frac{1}{2}}-1
=
\begin{cases}
 \Gauss{2\alpha}-2-1 & (\text{$0\le \alpha-\Gauss{\alpha}<\frac{1}{2}$ or, $\Gauss{2\alpha}$ is even}), \\
 \Gauss{2\alpha}-2 & (\text{$\frac{1}{2}\le \alpha-\Gauss{\alpha}<1$ or, $\Gauss{2\alpha}$ is odd}).
\end{cases} 
\] 
 Using this formula with $\alpha=\sqrt{2p}$,
 we see that $\Gauss{2\sqrt{2p}}-2$ coincides with $\hat{l}+1$ (resp. $\hat{l}$)
 if $\Gauss{2\sqrt{2p}}$ is even (resp. odd) and hence, from Lemma~\ref{lem:trivialbound},
 $\tilde{l}\ge \hat{l}+1$ (resp. $\tilde{l}\ge \hat{l}$).
 Now one obtains the results because $\tilde{l}<l_1=\hat{l}+2$,
 which follows from \eqref{def:l1}.
\end{proof}

 From this theorem,
 it can be written as $\tilde{l}=\hat{l}+\varepsilon$ for some $\varepsilon\in\{0,1\}$.
 As is the case of the circulant graphs \cite{HiranoKatataYamasaki,Katata2014},
 we call $p$ {\it exceptional} if $\Gauss{2\sqrt{2p}}$ is odd and $\varepsilon=1$ and {\it ordinary} otherwise.
 Now our task is to clarify which $p\in\mathbb{P}$ is exceptional and whether such primes exist infinitely many.


\begin{remark}
 The discussion in this section can be extended to that for $D_{2m}$ where $m$ is odd (not necessary prime),
 as we have done in the case of the circulant graphs in \cite{HiranoKatataYamasaki,Katata2014}.
 However, for simplicity, we only show results in the case where $m$ is odd prime. 
\end{remark}

\subsection{A characterization of exceptional primes}

 In what follows, we assume that $\Gauss{2\sqrt{2p}}$ is odd.
 
 For $l\in\widetilde{\cL}$,
 let $\mu(l)=\max\{\mu(S)\,|\,S\in\widetilde{\cS}_{l}\}$ and $\RB(l)=\RB(S)=2\sqrt{2p-l-1}$ for $S\in \widetilde{\cS}_{l}$.
 From the definition, $p$ is exceptional if and only if $\mu(\hat{l}+1)\le \RB(\hat{l}+1)$.
 To study this inequality, we at first construct $S\in\widetilde{\cS}_{\hat{l}+1}$ such that $\mu(\hat{l}+1)=\mu(S)$.

 For $l\in\widetilde{\cL}$,
 let $L(l)=\{(l_1,l_2)\in\mathbb{Z}^2_{\ge 0}\,|\,\text{$l_1+l_2=l$,\ $l_1$ is odd}\}$.
 Moreover, for $(l_1,l_2)\in L(l)$, define $S^{(l_1,l_2)}=S^{(l_1)}_1\sqcup S^{(l_2)}_2\in \mathcal{S}_{l}$ by
 $S^{(l_1)}_1=D_{1}\setminus\{1,x^{\pm 1},x^{\pm 2},\ldots,x^{\pm\frac{l_1-1}{2}}\}$ and 
 $S^{(l_2)}_2=D_{2}\setminus\{y,xy,x^2y.\ldots,x^{l_2-1}y\}$.
 One sees that $l_i(S^{(l_1,l_2)})=l_i$ for $i=1,2$ and   
\[
 z_j
=\sum^{\frac{l_1-1}{2}}_{h=-\frac{l_1-1}{2}}\omega^{hj}
=\frac{\sin{\frac{\pi jl_1}{p}}}{\sin{\frac{\pi j}{p}}}, \qquad 
 w_j
=\sum^{l_2-1}_{h=0}\omega^{hj}
=\omega^{\frac{j(l_2-1)}{2}}\frac{\sin{\frac{\pi jl_2}{p}}}{\sin{\frac{\pi j}{p}}},
\]
 whence $|\mu_j|=|\mu_j(l_1,l_2)|$ can be written as 
\[
 |\mu_j|
=|z_j|+|w_j|
=\frac{\sin{\frac{\pi jl_1}{p}}}{\sin{\frac{\pi j}{p}}}+\frac{\sin{\frac{\pi jl_2}{p}}}{\sin{\frac{\pi j}{p}}}
=2\frac{\sin{\frac{\pi jl}{2p}}}{\sin{\frac{\pi j}{p}}}\cos{\frac{\pi j|l_1-l_2|}{2p}}.
\]

 Now let us write $\hat{l}=2\Gauss{\sqrt{2p}-\frac{1}{2}}-1=\Gauss{2\sqrt{2p}}-2$ as     
\[
 \hat{l}=4k+r
\]
 for some $k\ge 0$ and $r\in\{1,3\}$. 

\begin{lem}
\label{lem:maximum}
 Let $(\check{l}_1,\check{l}_2)=\bigl(\frac{\hat{l}+1}{2},\frac{\hat{l}+1}{2}\bigr)$ if $r=1$ and
 $\bigl(\frac{\hat{l}+3}{2},\frac{\hat{l}-1}{2}\bigr)$ otherwise.
 Then, we have
\[
 \mu(\hat{l}+1)
=\mu\bigl(S^{(\check{l}_1,\check{l}_2)}\bigl)=\bigl|\mu_1(\check{l}_1,\check{l}_2)\bigr|.
\]
\end{lem}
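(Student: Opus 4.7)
The plan is to prove two matching inequalities: for every $S \in \widetilde{\cS}_{\hat{l}+1}$, show $\mu(S) \leq |\mu_1(\check{l}_1, \check{l}_2)|$, and then verify $\mu(S^{(\check{l}_1, \check{l}_2)}) = |\mu_1(\check{l}_1, \check{l}_2)|$. The two together yield both conclusions of the lemma.

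The heart of the upper bound is the extremal estimate that for any subset $I \subset \mathbb{Z}_p$ of size $l$ and any $j \in \{1, \ldots, p-1\}$,
\[
 \Bigl|\sum_{a \in I}\omega^{ja}\Bigr| \leq \frac{\sin(\pi l/p)}{\sin(\pi/p)},
\]
with equality on a consecutive progression. This follows by a swap argument: at an extremum with resultant pointed in a fixed direction, $I$ must consist of the $l$ indices of largest real projection, which is a centered block. Since $p$ is prime, multiplication by $j$ is a bijection on $\mathbb{Z}_p$, so the bound is uniform in $j$. Applying it to $|z_j(S_1)|$ with $l = l_1(S)$ and to $|w_j(S_2)|$ with $l = l_2(S)$, and invoking $\sin A + \sin B = 2\cos\frac{A-B}{2}\sin\frac{A+B}{2}$, yields
\[
 |\mu_j(S)| \leq 2\frac{\sin(\pi(\hat{l}+1)/(2p))}{\sin(\pi/p)} \cos\frac{\pi|l_1(S) - l_2(S)|}{2p} = \bigl|\mu_1\bigl(S^{(l_1(S), l_2(S))}\bigr)\bigr|.
\]
The $j = 0$ contribution $|\mu^{(-)}_0| = |l_1(S) - l_2(S)|$ is easily dominated by the right-hand side.

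Next I maximize over $(l_1, l_2) \in L(\hat{l}+1)$. Since $\cos$ is decreasing on $[0, \pi/2]$, $|\mu_1(S^{(l_1, l_2)})|$ is largest when $|l_1 - l_2|$ is minimal subject to $l_1 + l_2 = \hat{l}+1$ and $l_1$ odd. A parity case analysis on $r \in \{1, 3\}$ (recall $\hat{l} = 4k+r$) shows this minimum equals $0$ when $r = 1$ and $2$ when $r = 3$; the optimizer is exactly the pair $(\check{l}_1, \check{l}_2)$ in the statement.

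For the matching lower bound I check, using the explicit formula, that $|\mu_j(\check{l}_1, \check{l}_2)|$ is maximized over $j$ at $j = 1$. This reduces to the Fejér-type monotonicity $\sin(\pi j (\hat{l}+1)/(2p))/\sin(\pi j/p) \leq \sin(\pi(\hat{l}+1)/(2p))/\sin(\pi/p)$ for $2 \leq j \leq p-1$, which holds since $\hat{l}+1 \leq 2\sqrt{2p}$ keeps the numerator argument small relative to the denominator under $p \geq 29$. The main obstacle is the extremal estimate of Step 1: although the unconstrained maximum is folklore, the set $I$ corresponding to $D_1 \setminus S_1$ is forced to be symmetric about $0$ and to contain $0$, so one must re-run the swap argument in the symmetric category to confirm that the centered block $\{0, \pm 1, \ldots, \pm(l_1-1)/2\}$ remains extremal among symmetric size-$l_1$ subsets containing $0$.
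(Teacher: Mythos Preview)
Your proof is correct and follows the same outline as the paper's (two-sentence) argument: minimize $|l_1-l_2|$ subject to $l_1+l_2=\hat{l}+1$ with $l_1$ odd, then observe that $|\mu_1|$ dominates the other $|\mu_j|$. You supply the detail the paper omits, namely the extremal exponential-sum bound $\bigl|\sum_{a\in I}\omega^{ja}\bigr|\le \sin(\pi|I|/p)/\sin(\pi/p)$ that explains why the block sets $S^{(l_1,l_2)}$ dominate arbitrary $S\in\widetilde{\cS}_{\hat l+1}$; note that this same bound, applied to $S=S^{(\check l_1,\check l_2)}$ itself, already yields $|\mu_j|\le|\mu_1|$, so your separate Fej\'er-type step is redundant, and the ``symmetric obstacle'' you flag is no obstacle since the centered block lies in the constrained class and attains the unconstrained maximum.
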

\begin{proof}
 Consider when $|l_1-l_2|$ takes minimum under the condition that $l_1+l_2=\hat{l}+1$ and $l_1$ is odd. 
 Note that, since $p$ is prime, $|\mu_1(\check{l}_1,\check{l}_2)|$ takes maximum
 among $|\mu_j(\check{l}_1,\check{l}_2)|$ for $1\le j\le p-1$. 
\end{proof}

 When $\hat{l}=4k+r$, we see that $p\in I_{r,k}\cap \mathbb{P}$ where 
\begin{align*}
 I_{r,k}
&=\bigl\{t\in\mathbb{R}\,\bigr|\,\Gauss{2\sqrt{2t}}-2=4k+r\bigr\}\\
&=\Bigl[2k^2+(r+2)k+\frac{(r+2)^2}{8},2k^2+(r+3)k+\frac{(r+3)^2}{8}\Bigr).
\end{align*}
 This means that
 it is expressed as $p=f_{r,c_r}(k)$ for some integers $k\ge 3$ and $c_r\in\mathbb{Z}$ 
 with $-k+2\le c_1\le 1$ if $r=1$ and $-k+4\le c_3\le 4$ otherwise.
 Here,
\[
 f_{r,c_r}(t)=2t^2+(r+3)t+c_r.
\] 
 Let $I_{r}=\bigsqcup_{k\ge 3}I_{r,k}\cap\mathbb{P}$ and $C_r=\{r-4,r-2,r\}$.
 Moreover, for $c_r\in C_r$, define an integer $k_{r,c_r}\ge 3$ by
 $(k_{1,-3},k_{1,-1},k_{1,1})=(5,3,3)$ and $(k_{3,-1},k_{3,1},k_{3,3})=(7,3,3)$.
 We now obtain the following theorem,
 which gives a characterization for exceptional primes.

\begin{thm}
\label{thm:mainD}
 A prime $p\in I_r$ with $p\ge 29$ is exceptional if and only if
 it is of the form of $p=f_{r,c_r}(k)$ for some $c_r\in C_r$ and $k\ge k_{r,c_r}$.
\end{thm}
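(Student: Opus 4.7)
The plan is to translate the exceptionality condition into an explicit inequality in $\pi/p$, expand it asymptotically in $k$ under the parametrization $p=f_{r,c_r}(k)$, and read off the surviving constant term to determine which $c_r$ produce exceptional primes.

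By Theorem~\ref{thm:epsilon}, when $\Gauss{2\sqrt{2p}}$ is odd, $p$ is exceptional precisely when
$\mu(\hat{l}+1)^2\le 4(2p-\hat{l}-2)$.
From Lemma~\ref{lem:maximum} together with the closed form for $|\mu_j|$ given just before it (via $\sin A+\sin B=2\sin\tfrac{A+B}{2}\cos\tfrac{A-B}{2}$), one gets
\[
\mu(\hat{l}+1)
=\begin{cases}
2\sin\frac{(2k+1)\pi}{p}/\sin\frac{\pi}{p}&(r=1),\\
2\sin\frac{(2k+2)\pi}{p}\cos\frac{\pi}{p}/\sin\frac{\pi}{p}&(r=3).
\end{cases}
\]
Writing $\theta=\pi/p$, the exceptionality condition becomes
\[
F_r(k,p):=\frac{4\sin^2(N_r\theta)\cos^{2\varepsilon_r}\theta}{\sin^2\theta}-4(2p-\hat{l}-2)\le 0,
\]
where $(N_1,\varepsilon_1)=(2k+1,0)$ and $(N_3,\varepsilon_3)=(2k+2,1)$.

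Next, I would substitute $p=f_{r,c_r}(k)=2k^2+(r+3)k+c_r$ and expand each sine and cosine by Taylor's formula. Since $N_r=O(k)$ and $\theta=\pi/p=O(1/k^2)$, the $m$-th Taylor term contributes $O((N_r\theta)^{2m})=O(k^{-2m})$, giving an asymptotic expansion
\[
F_r\bigl(k,f_{r,c_r}(k)\bigr)=C_r(c_r)+O(1/k^2).
\]
A direct computation, using $\sin(n\theta)/\sin\theta=n-n(n^2-1)\theta^2/6+O(\theta^4 n^5)$ and $\cos\theta=1-\theta^2/2+O(\theta^4)$, yields the constants
\[
C_1(c_1)=16-8c_1-\tfrac{16\pi^2}{3},\qquad
C_3(c_3)=36-8c_3-\tfrac{16\pi^2}{3}.
\]
Since $2\pi^2/3\approx 6.58$, the inequality $C_r(c_r)\le 0$ singles out exactly the three largest admissible odd values of $c_r$, namely $c_r\in C_r=\{r-4,r-2,r\}$; conversely, for $c_r\le r-6$ one has $C_r(c_r)>0$, so $F_r>0$ for all sufficiently large $k$ and the corresponding primes are never exceptional.

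The final step is to upgrade the asymptotic into an honest inequality valid for every $k\ge k_{r,c_r}$. I would retain one additional Taylor term and bound the tails by the alternating-series estimates $|\sin x-x+x^3/6|\le x^5/120$ and $|\cos x-1+x^2/2|\le x^4/24$, producing an explicit remainder of order $R_r/k^2$ with $R_r$ a uniform constant. Comparing $R_r/k^2$ with $|C_r(c_r)|$ yields an effective threshold $k^\ast_{r,c_r}$; the stated thresholds $k_{r,c_r}\in\{3,5,7\}$ then arise by direct numerical verification of the small cases between the admissibility lower bound $c_r\ge -k+r+1$ and $k^\ast_{r,c_r}$. Here lies the main obstacle: while the qualitative asymptotic story is transparent (the constant $C_r(c_r)$ dictates the sign), pinning down the thresholds exactly, e.g.\ distinguishing the ordinary prime $p=107=f_{3,-1}(6)$ from the exceptional prime $p=139=f_{3,-1}(7)$ where $\mu(\hat{l}+1)$ and $\mathrm{RB}(\hat{l}+1)$ differ only in the fourth decimal, requires either sufficiently tight closed-form error control or explicit case analysis of a handful of boundary primes.
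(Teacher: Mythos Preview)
Your approach is essentially the paper's: expand the difference $|\mu_1(\check{l}_1,\check{l}_2)|-\RB(\hat{l}+1)$ asymptotically in $k$ (the paper works with the unsquared difference, whose leading term is $\frac{3(r+3)^2-24c_r-16\pi^2}{24}k^{-1}$, but the resulting sign condition on $c_r$ is identical to yours) and then settle the finitely many boundary cases by direct verification. The one methodological difference worth noting is that, instead of your uniform-remainder bound, the paper observes that the interpolation $F_r(t)=|\mu_1|-\RB$ is monotone decreasing on each interval $I_{r,k}$ for large $k$; this handles all $c_r\le r-6$ simultaneously---including those near $-k$ at the left edge of $I_{r,k}$, where your fixed-$c_r$ expansion with an $O(k^{-2})$ remainder would need extra care to be made uniform.
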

\begin{proof}
 To clarify when the inequality $\mu(\hat{l}+1)=|\mu_{1}(\check{l}_1,\check{l}_2)|\le \RB(\hat{l}+1)$ holds, 
 we introduce an interpolation function $F_r(t)$ of the difference between  
 $|\mu_{1}(\check{l}_1,\check{l}_2)|$ and $\RB(\hat{l}+1)$ on $I_{r,k}$, that is,
\begin{align*}
 F_{r}(t)
&=2\frac{\sin{\frac{\pi (4k+r+1)}{2t}}}{\sin{\frac{\pi}{t}}}\cos{\frac{\pi(r-1)}{2t}}-2\sqrt{2t-4k-r-2}.
\end{align*}
 One can see that $F_{r}(t)$ is monotone decreasing on $I_{r,k}$ for sufficiently large $k$.
 Moreover, at $t=p=f_{r,c_r}(k)\in I_{r,k}\cap\mathbb{P}$, one has
\[
 F_{r}(p)=\frac{3(r+3)^2-24c_r-16\pi^2}{24}k^{-1}+O(k^{-2})
\]
 as $k\to\infty$ because
\begin{align*}
 |\mu_{1}(\check{l_1},\check{l_2})|
&=
 2\frac{\sin{\frac{\pi (4k+r+1)}{2(2k^2+(r+3)k+c_r)}}}{\sin{\frac{\pi}{2k^2+(r+3)k+c_r}}}
\cos{\frac{\pi(r-2)}{2(2k^2+(r+3)k+c_r)}}\\
&=4k+r+1-\frac{2\pi^2}{3}\frac{1}{k}+O(k^{-2}),\\
 \mathrm{RB}(\hat{l}+1)
&=2\sqrt{2(2k^2+(r+3)k+c_r)-4k-r-2}\\
&=4k+r+1-\frac{(r+3)^2-8c_r}{8}\frac{1}{k}+O(k^{-2}).
\end{align*}
 This shows that $F_{r}(p)<0$ for sufficiently large $k$ if and only if
 $3(r+3)^2-24c_r-16\pi^2<0$, that is, $c_r\in C_r$
 (note that $c_r$ should be odd because $p$ is).
 Actually, for each $r\in\{1,3\}$ and $c_r\in C_r$,
 we can check that
 the inequality $F_{r}(p)<0$ with $p=f_{r,c_r}(k)$ holds if and only if $k\ge k_{r,c_r}$.
 This completes the proof of the theorem.
\end{proof}

 For $r\in\{1,3\}$ and $c_r\in C_r$, let $J_{r,c_r}=\{f_{r,c_r}(k)\in I_r\,|\,k\ge k_{r,c_r}\}$.
 Namely, $J_{r,c_r}$ is the set of exceptional primes $p$ of the form of $p=f_{r,c_r}(k)$.
 These are given as follows:
\begin{align*}
 J_{1,-3}&=\{67,93,157,283,643,877,1453,3037,4603,5197,\ldots\}, \\
 J_{1,-1}&=\{29,47,197,239,389,509,719,797,2309,2447,\ldots\}, \\
 J_{1,1}&=\{31,71,97,127,199,241,337,449,577,647,\ldots\}, \\
 J_{3,-1}&=\{139,307,359,607,919,1399,1619,1979,2239,2659,\ldots\}, \\
 J_{3,1}&=\{37,109,541,757,1009,1297,1621,2377,6841,7561,\ldots\}, \\
 J_{3,3}&=\{59,83,179,263,311,419,479,683,839,1103,\ldots,\}.
\end{align*} 

 The classical Hardy-Littlewood conjecture \cite{HardyLittlewood1923} asserts that
 if a quadratic polynomial $f(t)=at^2+bt+c$ with $a,b,c\in\mathbb{Z}$ satisfies the conditions that
 $a>0$, $(a,b,c)=1$, $a+b$ and $c$ are not both even
 and $D=b^2-4ac$ is not a square, then there are infinitely many primes represented by $f(t)$ and, 
 moreover, that $\pi(f;x)=\#\{k\le x\,|\,f(k)\in\mathbb{P}\}$ obeys the asymptotic behavior 
\begin{equation}
\label{for:HL}
 \pi(f;x)\sim \frac{C(f)}{2}\frac{x}{\log{x}}, \qquad 
 C(f)=2\prod_{p\ge 3}\Bigl(1-\frac{\bigl(\frac{D}{p}\bigr)}{p-1}\Bigr),
\end{equation}
 as $x\to\infty$.
 Here, $C(f)$ is called the Hardy-Littlewood constant with $\bigl(\frac{D}{p})$ being the Legendre symbol.
 From Theorem~\ref{thm:mainD},
 one sees that the existence of infinitely many exceptional primes is related to this conjecture.

\begin{cor}
 There exists infinitely many exceptional primes if and only if 
 the Hardy-Littlewood conjecture is true for at least one of $f_{r,c_r}(t)$ for $r\in\{1,3\}$ and $c_r\in C_r$.
 \qed
\end{cor}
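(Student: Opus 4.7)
The plan is to reduce the corollary to Theorem~\ref{thm:mainD} together with a direct verification that each of the six polynomials $f_{r,c_r}(t)$ meets the hypotheses of the Hardy--Littlewood conjecture as stated in \eqref{for:HL}. By Theorem~\ref{thm:mainD}, the set of exceptional primes $p\ge 29$ is precisely the finite union
\[
 \bigcup_{r\in\{1,3\}}\bigcup_{c_r\in C_r}J_{r,c_r},
\]
and since only finitely many primes below $29$ can possibly be exceptional (and these contribute only a finite set), the set of all exceptional primes is infinite if and only if at least one $J_{r,c_r}$ is infinite. Because $J_{r,c_r}=\{f_{r,c_r}(k)\in\mathbb{P}\,|\,k\ge k_{r,c_r}\}$, infinitude of $J_{r,c_r}$ is equivalent to the assertion that $f_{r,c_r}(t)$ represents infinitely many primes; this is exactly the qualitative content of the Hardy--Littlewood conjecture for $f_{r,c_r}$.

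Next I would check that each $f_{r,c_r}(t)=2t^2+(r+3)t+c_r$ actually falls under the purview of the Hardy--Littlewood conjecture, i.e.\ satisfies the standard assumptions listed before \eqref{for:HL}: positive leading coefficient $a=2$, $\gcd(a,b,c)=\gcd(2,r+3,c_r)=1$ (which holds since $c_r$ is odd in every case of $C_r=\{r-4,r-2,r\}$), $a+b=r+5$ and $c=c_r$ not both even (again because $c_r$ is odd), and discriminant $D=(r+3)^2-8c_r$ not a perfect square. A quick case analysis yields $D\in\{40,24,8\}$ for $r=1$ and $D\in\{44,28,12\}$ for $r=3$, none of which is a square, so the hypotheses are met.

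Combining these two observations gives the equivalence: infinitude of exceptional primes is equivalent to infinitude of $J_{r,c_r}$ for at least one pair $(r,c_r)$, which in turn is equivalent to the Hardy--Littlewood conjecture holding for at least one of the six polynomials $f_{r,c_r}(t)$. I do not expect any genuine obstacle here; the only subtlety is the bookkeeping one that the original Hardy--Littlewood formulation requires the full set of standard conditions on $(a,b,c,D)$, so the verification in the previous paragraph must be carried out explicitly to justify invoking \eqref{for:HL} rather than merely citing it. Once those conditions are verified, the corollary follows immediately from Theorem~\ref{thm:mainD}.
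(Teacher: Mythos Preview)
Your proposal is correct and matches the paper's approach: the paper treats the corollary as an immediate consequence of Theorem~\ref{thm:mainD} and simply writes \qed, and your argument is exactly the unpacking of that implication. Your explicit verification that each $f_{r,c_r}$ satisfies the standing hypotheses of the Hardy--Littlewood conjecture (coprimality, parity, nonsquare discriminant) is a worthwhile elaboration that the paper leaves implicit, and your computed discriminants are all correct.
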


\begin{remark}
 From \eqref{for:HL}, we can expect that
 $\pi(f_{r,c_r};x)\sim \frac{C(f_{r,c_r})}{2}\frac{x}{\log{x}}$
 where 
\begin{align*}
 \frac{C(f_{r,c_r})}{2}
&=\prod_{p\ge 3}\Bigl(1-\frac{\bigl(\frac{c_r'}{p}\bigr)}{p-1}\Bigr)
=
\begin{cases}
\begin{cases}
 0.671043\ldots & (r=1,\ c_1=-3),\\
 1.03566\ldots & (r=1,\ c_1=-1),\\
 1.84998\ldots & (r=1,\ c_1=1),\\
\end{cases}
\\[20pt]
\begin{cases}
 1.14801\ldots & (r=3,\ c_3=-1),\\
 0.757353\ldots & (r=3,\ c_3=1),\\
 1.38332\ldots & (r=3,\ c_3=3),\\
\end{cases}
\end{cases}
\end{align*}
 with $c_1'=4-2c_1$ and $c_3'=9-2c_3$.
\end{remark}

\begin{remark}
 The existence of infinitely many {\it ordinary} primes are verified as follows.
 Let $J=\bigsqcup_{r\in\{1,3\}}\bigsqcup_{c_r\in C_r}J_{r,c_r}$. 
 Moreover, for a positive integer $a$,  
 let $J_{r,c_r}(a)=\{n\in\mathbb{Z}_{\ge 0}\,|\,0\le n\le a-1 \ \text{and $n\equiv f_{r,c_r}(k)\pmod{a}$ for some $0\le k\le a-1$}\}$
 and $J(a)=\bigcup_{r\in\{1,3\}}\bigcup_{c_r\in C_r}J_{r,c_r}(a)$.
 If we can take $b\in\{0,1,2,\ldots,a-1\}\setminus J(a)$ satisfying $(a,b)=1$,
 then we have $\{at+b\,|\,t\in\mathbb{Z}\}\cap J=\emptyset$
 and, from the Dirichlet theorem of arithmetic progression, can find infinitely many primes in $\{at+b\,|\,t\in\mathbb{Z}\}$.
 Now, these are ordinary from Theorem~\ref{thm:mainD}.
 To achieve such a purpose, one may take, for example, $(a,b)=(29,4),(35,8)$ or $(40,33)$. 
\end{remark}


\bigskip 

\noindent
\textsc{Miki HIRANO}\\
 Graduate School of Science and Engineering, Ehime University,\\
 Bunkyo-cho, Matsuyama, 790-8577 JAPAN.\\
 \texttt{hirano@math.sci.ehime-u.ac.jp}\\

\noindent
\textsc{Kohei KATATA}\\
 Graduate School of Science and Engineering, Ehime University,\\
 Bunkyo-cho, Matsuyama, 790-8577 JAPAN.\\
 \texttt{katata@math.sci.ehime-u.ac.jp}\\

\noindent
\textsc{Yoshinori YAMASAKI}\\
 Graduate School of Science and Engineering, Ehime University,\\
 Bunkyo-cho, Matsuyama, 790-8577 JAPAN.\\
 \texttt{yamasaki@math.sci.ehime-u.ac.jp}
 
\end{document}